\theoremstyle{plain}
\newtheorem{theorem}{Theorem}
\newtheorem{proposition}[theorem]{Proposition}
\newtheorem{lemma}[theorem]{Lemma}
\newtheorem{corollary}[theorem]{Corollary}
\theoremstyle{remark}
\newtheorem{remark}[theorem]{Remark}
\theoremstyle{definition}
\newtheorem{definition}[theorem]{Definition}
\newtheorem{example}[theorem]{Example}
\begin{document}


%


 \title{PARTIAL REPRESENTATIONS OF ORDERINGS}




\maketitle

 \author{G. BOSI \footnote{Dipartimento de Scienze Economiche, Aziendali, Mathematiche e Statistiche\\ Universit\`a degli Studi di Trieste \\ Piazzale Europa 1, I-34127 \\ Trieste, Italia.
} }

 \author{A. ESTEVAN \footnote{Departamento de Matem\'aticas, Universidad P\'ublica de Navarra,
Campus Arrosad\'{\i}a\\ Pamplona, 31006, Spain\\ asier.mugertza@unavarra.es}}

 \author{M. ZUANON\footnote{Dipartimento di Economia e Management, Universit\`a degli
Studi di Brescia, Contrada Santa Chiara 50, 25122 Brescia, Italy.}}

 \begin{abstract}
In the present paper a new concept of representability is introduced, which can be applied to not total and also to intransitive relations (semiorders in particular).
This  idea tries to represent the orderings in the simplest manner, avoiding any unnecessary information. For this purpose, the new concept of representability is developed by means of partial functions, so that other common definitions of representability (i.e. (Richter-Peleg) multi-utility, Scott-Suppes representability,...) are now particular cases in which the partial functions are actually functions.  The paper also presents a collection of examples and propositions showing the advantages of this kind of representations, particularly in the case of partial orders and semiorders, 
 as well as some results showing the connections between distinct kinds of representations.
 \end{abstract}
\textbf{keywords:} {partial representability; multi-utility; preorders; semiorders; intransitivity.}

  \maketitle
 \setcounter{footnote}{0}
 \renewcommand{\thefootnote}{\arabic{footnote}}
 \section{Introduction and motivation} \label{s1}

Different kinds of representations of preferences have been recently proposed in the literature in order to consider general situations when completeness is not required. It is well known that in this case more than one function must be used. In some sense, the best way  of representing transitive preferences which are not necessarily total is to invoke  the multi-utility approach, since it provides a characterization of them. In this paper we generalize this classical approach in order to allow nontransitivity of the preferences. This is done by simply allowing each function to be defined not on the whole space of the alternatives but on some subset of it.

 Given a preorder $\precsim $ on $X$, a real function $u\colon X \to \mathbb{R}$ is said to be \emph{isotonic} or \emph{increasing} if for every $x ,y \in X$ the implication $x\precsim y \Rightarrow u(x)\leq u(y)$ holds true. In addition, if it also holds true that $x\prec y $ implies $u(x)< u(y)$, then $u$ is said to be a \emph{Richter-Peler utility representation}.
 
In the  case of  a total preorder $\precsim$ on $X$, it is said to be \emph{representable} \rm if there is a real-valued function $u\colon X\to \mathbb R$ that is \emph{strictly isotonic} or \emph{strictly increasing} (also known as {\em order-preserving}), so that, for every $x, y \in X$, it holds that $x \precsim y \iff u(x) \leq u(y)$. The map $u$ is said to be an \emph{order-monomorphism} \rm (also known as a \emph{utility function} \rm for $\precsim$).

 A (not necessarily total) preorder $\precsim$ on a set $X$   is said to have a {\em
multi-utility representation}  if  there exists a family $\mathcal{U}$ of isotonic real functions such that 
for all points $x ,y \in X$  the equivalence
\begin{equation} \label{mult1} 
x \precsim y \Leftrightarrow  \forall u \in {\mathcal U} \,\,(u(x) \leq u(y))\end{equation}
 holds. 
This kind of representation, whose main feature is to fully characterize
the preorder, was first introduced by Vladimir L. Levin \cite{Levin} (see also \cite{Levin2}), who called \emph{functionally closed}
a preorder admitting a multi-utility representation. However, the first systematic study of multi-utility representations is due to Ozgur Evren and Efe A. Ok \cite{EvO}, who presented different conditions for the existence of continuous multi-utility representations.

 While a multi-utility representation  exists for every not necessarily total preorder $\precsim$ on $X$ (see Evren and Ok \cite[Proposition 1]{EvO}), its application is in some sense limited, since if we start from a binary relation $\precsim$ on set $X$ and it admits the representation above, then it must be necessarily a preorder  (i.e., a reflexive and transitive binary relation). Nevertheless, it is interesting to search for a {\em continuous multi-utility representation} of a preorder $\precsim$ when the set $X$ is endowed with a topology $\tau$  (cf., for instance, Evren and Ok \cite{EvO}, Bosi and Herden \cite{BH} and Alcantud et al. \cite{Alc}). The existence of a finite multi-utility representation was studied by Ok \cite{Ok} and Kaminski \cite{Kam}, who refers to {\em representation by means of multi-objective functions}.

We recall that a particular case of the previous representation is the so called {\em Richter-Peleg multi-utility representation} (see Minguzzi \cite{Min2}), which holds when all the functions of the family $\mathcal U$ in representation (\ref{mult1}) are {\em order-preserving} for the preorder $\precsim$ (i.e., for all $u \in {\mathcal U}$, and $x,y \in X$, $x \prec y$ implies that $u(x) < u(y)$). It is well known that in this case the family $\mathcal U$ also represents the {\em strict part} $\prec$ of $\precsim$ (see Alcantud et al. \cite[Remark 2.3]{Alc}), in the sense that, for all $x,y \in X$, $x \prec y$ if and only if $u(x) < u(y)$ for all $u \in {\mathcal U}$.

Therefore if we want to represent a binary relation $\precsim$ which is reflexive and not necessarily transitive (like {\em interval orders} or {\em semiorders}, for example), we cannot use the multi-utility approach. In order to remove this restriction, Nishimura and Ok  \cite{Nish} introduced very recently the following representation of a necessarily reflexive binary relation $\precsim$, which allows intransitivity: for a set $\Bbb U$  of sets $\mathcal U$ of real-valued functions $u$ on $X$, and all points $x,y \in X$, \begin{equation} \label{equdef} x \precsim y \Leftrightarrow \sup_{{\mathcal U} \in {\Bbb U}}\inf_{u \in {\mathcal U}}\left( u(y) - u(x) \right) \geq 0.
\end{equation}

While this {\em maxmin multi-utility representation} fully characterizes the binary relation $\precsim$ in the general case when it is neither total nor transitive, we can consider that this latter representation is much demanding and difficult to perform, since it requires, in some sense, a lot of {\em information}, represented by a set of sets of real-valued functions.

In this paper,  we introduce the concept of \emph{partial multi-utility representation} of a preorder as a coherent and  practical representation that characterizes the order structure. This means that we refer to the multi-utility approach (\ref{mult1}) in the much more general case when the functions $u$ are not required to be defined on the whole set $X$, but they are only {\em partial} (in the sense that, generally speaking, they are defined on subsets of $X$).

Needless to say, this generalization leads to a new representation which is compatible both with incompleteness and intransitivity. Indeed, the characteristic feature of the present work is to allow intransitivity in a multi-utility fashion.

Referring to the multi-utility representation (\ref{mult1}), usually it is not easy or even possible (when continuity or at least upper semicontinuity of the functions is required) to construct a representation of a binary relation through functions that assign a value for each element of the set. Actually, since these relations fail to be total in general, it has not too much sense to impose a value to each element by each function. So, it seems consistent to provide the representation a degree of uncertainty or `undefinition': if we cannot compare a pair $(x,y)$, maybe we can avoid mapping $x$ and $y$ with each function of the representation (i.e., with each function $u \in {\mathcal U}$).

This  uncertainty or `undefinition'  allows us to construct representations more easily (even when the order structure is not representable in the usual manner), and on the other hand it  facilitates the continuity of the representation.

Although multi-utility representations deserve their interest in economics for the aforementioned reasons, they also appear in computer science, especially in {\em distributed systems} (see e.g  Lamport \cite{Lamport}, Estevan \cite{DS}, Fidge \cite{fid}, Raynal and Singhal \cite{ray} and Mattern \cite{virtual}) and even in physics (see e.g. Panangaden \cite{Panan}).
 In computer science the terminology (\emph{labelings, random structures, clocks, ...}) is quite different with respect to the field of economics, but the ideas are essentially the same. Moreover, the idea of partial functions is not strange at all in computation so, this new theory may be quite useful for this field.

 \emph{The structure of the paper goes as follows}\rm:
After the introduction and the motivation, a section of notation and preliminaries is included. In Section~\ref{s3} the new concept of \emph{partial representability} is introduced, as well as some examples and propositions showing its advantages for the case of preorders. 
In this section, we also 
show some connections between distinct kinds of representations. 
Finally, in Section~\ref{s4} we focus our attention on the usefulness of the partial representability for intransitive relations, and in particular, we deepen  the study of semiorders presenting a partial version of the Scott-Suppes representation. A Section~\ref{s6} of further comments closes the paper. 
 There, incidentally, it is shown that a Theorem of Evren and Ok \cite{EvO} is incorrect.
 \setcounter{footnote}{0}
 \renewcommand{\thefootnote}{\arabic{footnote}}

  \setcounter{footnote}{0}
 \renewcommand{\thefootnote}{\arabic{footnote}}

 \section{Notation and preliminaries} \label{s2}

 From now on  $X$ will denote  a nonempty set.
 
  \begin{definition}
   A binary relation $\mathcal{R}$ on $X$ is a subset of the Cartesian product $X \times X$. Given two elements $x,y \in X$, we will use the standard notation $x \mathcal{R} y$ to express that the pair $(x,y)$ belongs to $\mathcal {R}$.

 Associated to a binary relation $\mathcal{R}$ on a set $X$, we consider its \emph{negation} \rm (respectively, its 
  \emph{dual}\rm) as the binary relation $\mathcal{R}^c$ (respectively, $\mathcal{R}^t$) on $X$ given by $(x,y) \in \mathcal{R}^c \iff (x,y) \notin \mathcal{R}$ for every $ x,y \in X$ (respectively, given by $(x,y) \in \mathcal{R}^t \iff (y,x) \in \mathcal{R}, \ $ for every $ x,y \in X)$. We also define the 
   \emph{codual} \rm $\mathcal{R}^a$ of the given relation $\mathcal{R}$, as $\mathcal{R}^a = (\mathcal{R}^t)^c$.

 A binary relation $\mathcal{R}$ defined on a set $X$ is said to be:
 \begin{itemize}
 \item[(i)] \emph{reflexive} \rm if $x \mathcal{R} x$ holds for every $x \in X$,
 \item[(ii)] \emph{irreflexive} \rm if $x \mathcal{R}^c x$ holds for every $x \in X$,
 \item[(iii)] \emph{symmetric} \rm if $\mathcal{R}$ and $\mathcal{R}^t$ coincide,
 \item[(iv)] \emph{antisymmetric} \rm if $\mathcal{R} \cap \mathcal{R}^t \subseteq \Delta = \{ (x,x): x \in X \}$,
 \item[(v)] \emph{asymmetric} \rm if $\mathcal{R} \cap \mathcal{R}^t = \varnothing$,
 \item[(vi)] \emph{total} \rm if $\mathcal{R} \cup \mathcal{R}^t = X \times X$,
 \item[(vii)] \emph{transitive} \rm if $x \mathcal{R} y $ and $ y \mathcal{R} z \Rightarrow x \mathcal{R} z $ for every $x,y,z \in X$.
 \end{itemize}
 \end{definition}

 In the particular case of a nonempty set where some kind of \emph{ordering} \rm (e.g., preorder, interval order,  biorder, etc.) has been defined, the standard notation is different. We include it here for sake of completeness, and we will use it throughout the present manuscript.

In what follows   $\precsim$  denotes a reflexive binary relation on $X$. 
Given a reflexive binary relation $\precsim$, then as usual we denote the associated \emph{asymmetric} relation by $\prec$ and the
associated \emph{indifference} relation by $\sim$ and these are defined, respectively, by $[x \prec y \iff (x \precsim y) \text{ and } (y \precsim^c x)]$ and $[x \sim y \iff (x\precsim y) \text{ and } (y \precsim x)]$.
 If two elements are not comparable, that is, if it holds true that $x\precsim^c y$ as well as $y\precsim^c x$ for some $x,y\in X$, then we shall denote that by $x \bowtie y.$

 \begin{definition}
  A \emph{preorder} $\precsim$ on $X$ is a binary relation on $X$ which is reflexive and transitive.
 An antisymmetric preorder is said to be an \emph{order}. A \emph{total preorder} \rm $\precsim$ on a set $X$ is a preorder such that if $x,y \in X$ then $(x \precsim y) \text{ or } (y \precsim x)$. A total order is also called a \emph{linear order}\rm, and a totally ordered set $(X,\precsim)$ is also said to be a \emph{chain}\rm. Usually, an order that fails to be total is also said to be a \emph{partial order}.
 If $\precsim$ is a preorder on $X$, then 
 the associated indifference  relation  $\sim$ is actually an equivalence relation.
The asymmetric part of a linear order (respectively, of a total preorder) is said to be a \emph{strict linear order} \rm (respectively, a strict total preorder).
Usually, in the case of partial orders (in particular dealing on finite sets), the relation $\precsim$ is also denoted by $\sqsubseteq$, and the corresponding strict part by $\sqsubset$.

In case of not total relations defined on a set $X$, if one element is not related or comparable to any other  of the set, this element is said to be an \emph{isolated point} \cite{BRME}. Given a preorder $\precsim$ on $X$, a set $Y\subseteq X$ is called an \emph{antichain} if $\prec_{|Y}=\emptyset$. The \emph{width} (denoted by $w(X,\precsim)$) of a preordered set is the cardinality of the largest antichain $Y$ contained in $X$. A partial order $\precsim$ on $X$ is \emph{near-complete} if and only if
 $w(X,\precsim)< \infty $.

For every $x \in {X}$ we define the following subsets of ${X}$:
\[l(x)= \{y \in {X} \mid y \prec x\},\,\,\,r(x)= \{z \in {X} \mid x
\prec z\},\] \[d(x)= \{y \in {X} \mid y \precsim x\},\,\,\,i(x)= \{z
\in {X} \mid x \precsim z\}.\]


 \end{definition}

\begin{definition}
A preorder $\precsim$ on a topological space $(X, \tau )$ is \emph{regular} if and only if for each $x\in X$ sets $i(x)$ and $d(x)$ are closed.
\end{definition}

 Next Definition~\ref{lkeste} introduces the notion of representability\footnote{The notion of representability of some orderings is not unique (see \cite{ales,Naka,gurea1}).}. The idea behind representability corresponds to the possibility of converting qualitative scales (preferences) into quantitative ones, comparing real numbers instead of, just, elements of a nonempty set.

 \begin{definition} \label{lkeste} A total preorder $\precsim$ on $X$ is called \emph{representable} \rm if there is a real-valued function $u\colon X\to \mathbb R$ that is order-preserving, so that, for every $x, y \in X$, it holds that $[x \precsim y \iff u(x) \leq u(y)]$. The map $u$ is said to be an \emph{order-monomorphism} \rm (also known as a \emph{utility function} \rm for $\precsim$).

\end{definition}

Now we introduce the definitions of some  intransitive relations, namely interval orders and semiorders \cite{ales,BRME,Fis2,Luc,Sco}. 

 \begin{definition} An {\em  interval order} $\precsim$ on a set $X$ is a reflexive binary relation on $X$ which in addition satisfies the following condition
for all $x,y, z,w \in X$:
\[ (x\precsim z) \,\,and \,\, (y \precsim
w) \Rightarrow (x\precsim w) \,\,or\,\,(y \precsim z).\]

A \emph{semiorder} $\precsim$ on a  set $X$ is a binary relation on $X$ which is an interval order and in addition verifies the following condition for all $x,y, z,w \in X$: \[ (x\precsim y) \,\,and \,\,(y \precsim z) \Rightarrow (x\precsim w) \,\,or \,\,(w \precsim z).\] \end{definition}

\begin{definition} \label{regular} An asymmetric  relation $\prec$ defined on a set $X$ is called \emph{regular with respect to sequences}
if there are no $x,y \in X$, and sequences $(x_n)_{n \in \mathbb{N}}, (y_n)_{n \in \mathbb{N}} \subseteq X$, such that $x \prec \ldots \prec x_{n+1} \prec x_n \prec \ldots \prec x_1$ happens, or, dually, $y_1 \prec \ldots \prec y_n \prec y_{n+1} \prec \ldots \prec y$ holds. 

\end{definition}

\begin{definition} \label{veinte}
A semiorder $\precsim$ defined on a nonempty set $X$, is said to be \emph{regular} (with respect to sequences) if its corresponding strict preference $\prec$ is regular with respect to sequences in the sense of Definition~\ref{regular}. \end{definition}

\begin{definition}A semiorder $\precsim$ \rm defined on $X$ is said to be \emph{SS-representable} (that is {\em representable in the sense of Scott and Suppes})
if there exists a real-valued map $u\colon X \to \mathbb R$ (now again called a \emph{utility function}\rm) such that $x \precsim y \iff u(x)\leq u(y)+1 \ (x,y \in X)$ (see Scott and Suppes \cite{Sco}).

 (In this case, the pair $(u,1)$ is said to be a \emph{Scott-Suppes representation} \rm of the semiorder $\prec$).
\end{definition}

\begin{remark} It is well known that regularity is a necessary condition for the existence of a Scott-Suppes representation (see e.g. \cite{gurea2}).\end{remark}

\section{Partial representability of preorders}\label{s3}

In the present section we introduce the main definitions and some propositions illustrating the advantages of dealing with partial representability instead of the usual representability. Now we focus our attention on preorders, leaving a further study concerning intransitive relations to the next section. 
 We also recover some results (Proposition~\ref{Pmurprpmu} and Proposition~\ref{propositiontotal}) introduced in \cite{Alc}, but now adapting and proving them for this new theory of partial representability.

The knowledge on preorders, in particular in finite partially ordered sets, is a strong tool in computer sciences. That is the reason why we include some notions (e.g. labelings, random structure, ... \cite{L1}) related to this field. Furthermore, these concepts are strongly related to similar ideas on economics (e.g. utilities, Richter-Peleg multi-utility representations, ... \cite{Alc,BH,EvO,Peleg,Richter}). Besides, partial functions are  quite common in computing so, dealing with partial functions in order to represent orderings could be a good technique.

In the following lines we introduce some basic definitions.

\begin{definition}\label{partialfunction}
A \emph{partial function} from $X$ to $Y$ (written as $f\colon X \nrightarrow Y$) is a function $f\colon X' \to Y$, for some subset $X'$ of $X$. It generalizes the concept of a function $f\colon X \to Y$ by not forcing $f$ to map every element of $X$ to an element of $Y$ (only some subset $X'$ of $X$). If $X' = X$, then $f$ is called a total function and is equivalent to a function.
\end{definition}

\begin{definition}\label{partialfunctioncont}
A partial function $f\colon (X,\tau_X) \nrightarrow (Y,\tau_Y)$ is \emph{continuous} if the  corresponding total function $f\colon (X',\tau_{X'}) \to (Y,\tau_Y)$ is continuous (here $X'$ is the biggest subset  of $X$ where the partial function $f$ is defined and $\tau_{X'} $ is the topology on $X'$ inherited from $(X,\tau_X)$).
\end{definition}

\begin{definition}\label{partialrep}
Let $\precsim$ be a preorder on a nonempty set $X$. We will say that the preorder is \emph{partial multi-utility representable} (or \emph{multi-utility representable through partial functions}) if there exists a family of real partial functions $\mathcal{U}$ on $X$ such that for any pair $x\precsim y$
there exists $u\in \mathcal{U}$ such that  $u(x)\leq u(y)$, as well as  $v(x)\leq v(y)$  for any  $v\in \mathcal{U} $ which is defined on both $x$  and  $y$.
\end{definition}

\begin{remark}
With definition above, notice that:

\noindent $(i)$ $x\prec y$ if and only if there exists $u\in \mathcal{U}$ such that  $u(x)< u(y)$, as well as  $v(x)\leq v(y)$  for any  $v\in \mathcal{U} $ which is defined on both $x$  and  $y$.
\medskip

\noindent $(ii)$ $x\sim y$ if and only if there exists $u\in \mathcal{U}$ such that  $u(x)= u(y)$, as well as  $v(x)= v(y)$  for any  $v\in \mathcal{U} $ which is defined on both $x$  and  $y$.
\medskip

\noindent $(iii)$ $x\bowtie y$ if and only if there exists $u,v\in \mathcal{U}$ such that  $u(x)< u(y)$ and  $v(x)> v(y)$, or there is no    $v\in \mathcal{U} $  defined on both $x$  and  $y$.
\end{remark}

\begin{definition}\label{partialRPrep}
Let $\precsim$ be a preorder on a nonempty set $X$. We will say that the preorder is \emph{partial Richter-Peleg multi-utility representable} (or \emph{Ritcher-Peleg multi-utility representable through partial functions}) if there exists an \emph{isotonic}  partial multi-utility representation $\mathcal{U}$ on $X$. This means that  there exists a  partial multi-utility representation $\mathcal{U}$ on $X$ such that for any pair $x\prec y$
there exists $u\in \mathcal{U}$ such that  $u(x)< u(y)$, as well as  $v(x)< v(y)$  for any  $v\in \mathcal{U} $ which is defined on both $x$  and  $y$.
\end{definition}

\begin{remark}
With definition above, notice that the corresponding indifference and incomparability are characterized like in the previous case of partial multi-utility.
\end{remark}

\begin{remark}\label{remark123}

\noindent$(1)$ In the definitions above we say `partial' because the representation is made by means of partial functions, but this kind of representations totally characterizes the order structure.

\noindent$(2)$
If the partial functions of the definition above are functions (that is, all of them are defined on all the set $X$) then we recover the definition of (Richter-Peleg) multi-utility representation. That is, the partial representation generalizes the concept of representability.

\noindent$(3)$
Notice that, given a partial (Richter-Peleg) multi-utility representation and an element $x$ of the set, if there is no function defined on this point $x$, then this element is an isolated  point.

\end{remark}



\begin{proposition}\label{Pmurprpmu}
Let $\precsim$ be a preorder on $X$ and $A$ the set of all isolated points. 
Assume that there exists a (continuous)  partial multi-utility representation $\{v_i\}_{i\in I}$. If there exists a (continuous) Richter-Peleg utility of the preorder on a subset ${Y}$ such that $X\setminus A \subseteq Y$, then there exists a (continuous) partial Richter-Peleg multi-utility representation. 
\end{proposition}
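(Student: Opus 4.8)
The plan is to merge the two given ingredients into a single family of partial functions, each of which is order-preserving (so that the family is Richter-Peleg), while retaining enough directional information from the original family $\{v_i\}_{i\in I}$ to still detect incomparability. The basic observation I would exploit first is that any two \emph{comparable} points (i.e. $x\prec y$, or $x\sim y$ with $x\neq y$) are necessarily non-isolated, hence both lie in $X\setminus A\subseteq Y$, so the Richter-Peleg utility $w$ is defined on both of them. This is exactly what lets us upgrade the weak inequalities supplied by the $v_i$ into strict ones on every comparable pair, which is what the hypothesis $X\setminus A\subseteq Y$ is there to guarantee.

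For the construction, for each $i\in I$ and each rational $q>0$ I would set $w_{i,q}:=v_i+q\,w$, with domain $D_i\cap Y$ (where $D_i$ denotes the domain of $v_i$), and put $\mathcal{U}':=\{w_{i,q}: i\in I,\ q\in\mathbb{Q},\ q>0\}$. The easy half is that every member of $\mathcal{U}'$ is order-preserving: if $x\prec y$ with $x,y\in D_i\cap Y$, then $v_i(x)\le v_i(y)$ (since $v_i$ belongs to a partial multi-utility and is defined on both) while $q\,w(x)<q\,w(y)$ (since $w$ is Richter-Peleg and $q>0$), so $w_{i,q}(x)<w_{i,q}(y)$. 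Existence of a strictly separating function for each $x\prec y$ is then immediate, since such a pair is comparable, both points lie in $Y$, and the original family provides some $v_i$ defined on both, whence $w_{i,q}$ is defined on both and strict.

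The delicate point, and the place where the naive choice $\{v_i+w\}$ fails, is to verify that $\mathcal{U}'$ still \emph{characterizes} $\precsim$, i.e. that it does not turn an incomparable pair into a comparable one. The forward implication is routine: $x\precsim y$ gives $v_j(x)\le v_j(y)$ and $w(x)\le w(y)$ on every relevant domain, hence $w_{j,q}(x)\le w_{j,q}(y)$. For the converse I would argue by contraposition: suppose some $w_{i,q}$ is defined on both $x$ and $y$ (so $x,y\in Y$) yet $x\not\precsim y$. Since $v_i$ is then defined on both, the original partial multi-utility forces some $v_k$, defined on both, with $v_k(x)>v_k(y)$. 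Writing $\delta:=v_k(x)-v_k(y)>0$ we get $w_{k,q}(x)-w_{k,q}(y)=\delta+q\,(w(x)-w(y))\to\delta>0$ as $q\to 0^{+}$ through the rationals, so for a sufficiently small rational $q>0$ the function $w_{k,q}\in\mathcal{U}'$ is defined on both and satisfies $w_{k,q}(x)>w_{k,q}(y)$, preventing the family from declaring $x\precsim y$. The same device reproduces, for incomparable pairs, the two oppositely directed functions required by the incomparability clause of the Remark following Definition~\ref{partialrep}. The whole reason for scaling $w$ by \emph{all} small positive rationals, rather than adding it once, is precisely to keep the contribution of $w$ from overwhelming the sign information carried by the $v_k$ on incomparable pairs; this is the main obstacle, and a single explicit incomparable example shows that $\{v_i+w\}$ genuinely does not work.

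It then remains to dispatch the isolated points and the continuity clause. An isolated point $a\in A$ is incomparable to every other element; if $a\notin Y$ then no member of $\mathcal{U}'$ is defined on $a$, which by Remark~\ref{remark123}$(3)$ is exactly the desired behaviour, while if $a\in Y$ the argument of the previous paragraph again yields two oppositely directed functions witnessing incomparability. Finally, in the topological setting each $w_{i,q}=v_i+q\,w$ is continuous on $D_i\cap Y$ as a sum of continuous partial functions restricted to a subspace (in the sense of Definition~\ref{partialfunctioncont}), so continuity of the data is inherited by $\mathcal{U}'$, and $\mathcal{U}'$ is the required (continuous) partial Richter-Peleg multi-utility representation.
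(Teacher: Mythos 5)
Your proposal is correct and follows essentially the same route as the paper: the paper's proof takes exactly the family $\mathcal{U}=\{v+\alpha f:\ v\in\mathcal{V},\ \alpha\in\mathbb{Q},\ \alpha>0\}$ (with the sum declared undefined wherever either summand is undefined) and simply asserts that it is ``easily checked'' to be a (continuous) partial Richter-Peleg multi-utility representation. Your write-up supplies the verification the paper omits, in particular the key observation that letting the rational scale $q\to 0^{+}$ is what preserves the incomparability information carried by the $v_i$.
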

\begin{proof}
Let $\mathcal{V}=\{v_i\}_{i\in I}$ be a (continuous) partial multi-utility representation of $\precsim $,
and let $f$ be a (continuous) Richter-Peleg representation of $\precsim_{|Y} $.
Then it is easily checked that $\mathcal{U}=\{v+\alpha f: v\in \mathcal{V}, \alpha \in \mathbb{Q} , \alpha >0\}$\footnote{Here, if a function $v$ or $f$ is not defined on an element $x$, then we define the sum between $v$ and $f$ on $x$ by $(v+f)(x)=\emptyset$, that is, as not defined. Otherwise, the sum is defined as usual: $(v+f)(x)=v(x)+f(x)$.} is a (continuous) partial Richter-Peleg  multi-utility representation of $\precsim $. The continuity of the functions of $\mathcal{U} $ arises from the continuity of the functions of $\mathcal{V} $ and from the continuity of $f$ (see \cite{Willard}).

This argument serves for the corresponding equivalence under upper/lower semicontinuity too.
\end{proof}

These new partial utilities migth be useful for dealing with random structures (see Schellekens \cite{L1}).  These structures are studied in computation, with several questions unsolved yet.
In the following lines we include some basic definitions related to this topic.


\begin{definition}\label{lakeste}
 Let $(X,\sqsubseteq )$ be a finite partially ordered set with $|X|=n$. We define a \emph{labeling} of the partial order as a function $u\colon (X, \sqsubseteq)\to \{1, ..., n\}$ such that for any $x\sqsubset y$ it holds that $u(x)<u(y)$,  $x,y \in X$.
 \end{definition}

 \begin{definition}\label{repre}
 Let $(X,\sqsubseteq )$ be a finite partially ordered set. The collection $\mathcal{U}=\{u_i\}_{i\in I}$ of all possible labelings is called the \emph{random structure} of the partial order, and it is also denoted by $\mathcal{R}_{\mathcal{L}}(X,\sqsubseteq)$\footnote{Here, $\mathcal{L}$ denotes the set of labels where labeling functions take values. In this paper,   $\mathcal{L}$ will be the set $\{1,...,n\}$, where $n=|X|$. Therefore, we will omit this subscript.}.
 \end{definition}

\begin{remark}
\noindent$(1)$
Notice that a labeling is simply a linear order extending the strict preference
and the random structure is simply the set of all such extensions.

\noindent$(2)$
There is a unique correspondence between partial orders and random structures: each of ones defines the other (see \cite{L1}).

\noindent$(3)$ Notice that the concept of random structure implies a Richter-Peleg multi-utility representation.
\end{remark}




\begin{example}\label{Elabel}
Let $(X,\sqsubseteq)$ be the partially ordered set defined by $\{x_1\sqsubset  x_2\sqsubset  x_4$, $x_3 \sqsubset x_4\}$. The corresponding random structure is shown in Figure~\ref{figure111}, whereas a partial multi-utility Richter-Peleg representation is shown in Figure~\ref{figureP111}.
\end{example}

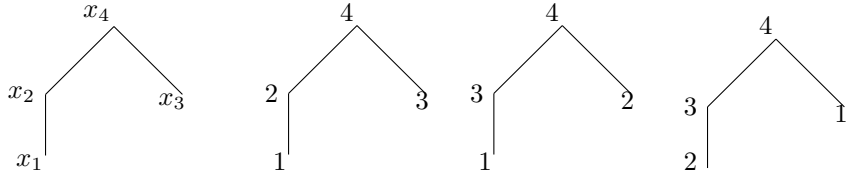
\begin{figure}[htbp]
\begin{center}
\begin{tikzpicture}[scale=0.9]
\vspace*{-1.8cm}
    \draw[] (0,1.1) -- (0,2);
    \draw[] (0,2)--(1,3);
    \draw[] (2,2)--(1,3);
\draw (0,2) node[anchor=east] {\small $x_2$};
\draw (0.12,1) node[anchor=east] {\small $x_1$};
\draw (2.2,1.9) node[anchor=east] {\small $x_3$};
\draw (1.1,3.2) node[anchor=east] {\small $x_4$};
\end{tikzpicture}\quad
\quad \begin{tikzpicture}[scale=0.9]
    \draw[] (0,1.1) -- (0,2);
    \draw[] (0,2)--(1,3);
    \draw[] (2,2)--(1,3);
\draw (0,2) node[anchor=east] {\small $2$};
\draw (0.12,1) node[anchor=east] {\small $1$};
\draw (2.2,1.9) node[anchor=east] {\small $3$};
\draw (1.1,3.2) node[anchor=east] {\small $4$};
 \draw[] (3,1.1) -- (3,2);
    \draw[] (3,2)--(4,3);
    \draw[] (5,2)--(4,3);
\draw (3,2) node[anchor=east] {\small $3$};
\draw (3.12,1) node[anchor=east] {\small $1$};
\draw (5.2,1.9) node[anchor=east] {\small $2$};
\draw (4.1,3.2) node[anchor=east] {\small $4$};
\end{tikzpicture}\quad
\begin{tikzpicture}[scale=0.9]
  \draw[] (1,-1.7) -- (1,-0.8);
    \draw[] (1,-0.8)--(2,0.2);
    \draw[] (3,-0.8)--(2,0.2);
\draw (1,-0.8) node[anchor=east] {\small $3$};
\draw (1,-1.6) node[anchor=east] {\small $2$};
\draw (3.2,-0.9) node[anchor=east] {\small $1$};
\draw (2.1,0.4) node[anchor=east] {\small $4$};

\end{tikzpicture}
\caption{Partially ordered set with its corresponding labelings.}
\label{figure111}
\end{center}
\end{figure}

\begin{figure}[htbp]
\begin{center}
\begin{tikzpicture}[scale=0.9]
\vspace*{-1.8cm}
    \draw[] (0,1.1) -- (0,2);
    \draw[] (0,2)--(1,3);
    \draw[] (2,2)--(1,3);
\draw (0,2) node[anchor=east] {\small $x_2$};
\draw (0.12,1) node[anchor=east] {\small $x_1$};
\draw (2.2,1.9) node[anchor=east] {\small $x_3$};
\draw (1.1,3.2) node[anchor=east] {\small $x_4$};
\end{tikzpicture}\qquad
\qquad \begin{tikzpicture}[scale=0.9]
    \draw[] (0,1.1) -- (0,2);
    \draw[] (0,2)--(1,3);
    \draw[] (2,2)--(1,3);
\draw (0,2) node[anchor=east] {\small $2$};
\draw (0.12,1) node[anchor=east] {\small $1$};
\draw (2.2,1.9) node[anchor=east] {\small $\varnothing$};
\draw (1.1,3.2) node[anchor=east] {\small $3$};
 \draw[] (4,1.1) -- (4,2);
    \draw[] (4,2)--(5,3);
    \draw[] (6,2)--(5,3);
\draw (4,2) node[anchor=east] {\small $\varnothing$};
\draw (4.12,1) node[anchor=east] {\small $\varnothing$};
\draw (6.2,1.9) node[anchor=east] {\small $1$};
\draw (5.1,3.2) node[anchor=east] {\small $2$};
\end{tikzpicture}
\caption{Partially ordered set with a  partial Richter-Peleg multi-utility.}
\label{figureP111}
\end{center}
\end{figure}
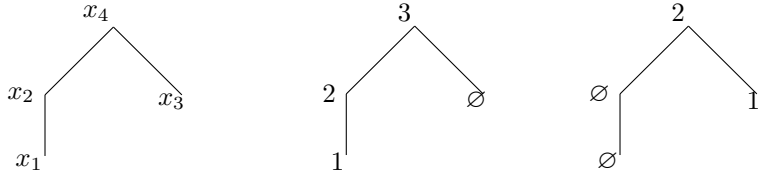

\begin{definition}
Let $\sqsubseteq $ be a finite partial order on $X$. The \emph{Scott topology}
 $\tau_{S}$ is defined by means of the basis
 $\{U_\sqsubseteq (x)\}_{x\in X}= \{\{y\in X \mid x\sqsubseteq y\}\}_{x\in X}$.
\end{definition}
\begin{remark} The Scott topology generated by a partial order is  $T_0$ \cite{L2}.
\end{remark}

The following proposition is well known \cite{L2}.

\begin{proposition}
Let $(X,\sqsubseteq)$ and $(Y, \sqsubseteq')$ be two finite partially ordered sets endowed with the corresponding Scott topologies. Then, a function $f\colon X\to Y$ is continuous if and only if
it is an order-preserving function, that is, if and only if
$x\sqsubseteq y$ implies that  $f(x)\sqsubseteq' f(y)$, for any $x,y\in X$.
\end{proposition}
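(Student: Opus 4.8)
The plan is to reduce both implications to a single structural observation: in the Scott topology on a finite poset, the open sets are precisely the up-sets (upward closed sets). Indeed, each basic open $U_\sqsubseteq(x) = \{y \mid x \sqsubseteq y\}$ is an up-set by transitivity, an arbitrary union of up-sets is again an up-set, and conversely any up-set $V$ satisfies $V = \bigcup_{x \in V} U_\sqsubseteq(x)$ (using reflexivity to guarantee $x \in U_\sqsubseteq(x)$). I would record this equivalence first, since both directions of the proposition rest on it.

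For the sufficiency direction ($f$ order-preserving $\Rightarrow$ $f$ continuous), I would verify continuity on a basis of $(Y, \sqsubseteq')$: it suffices to check that $f^{-1}(U_{\sqsubseteq'}(y'))$ is open for every $y' \in Y$. Taking any $x$ in this preimage and any $w$ with $x \sqsubseteq w$, order-preservation gives $f(x) \sqsubseteq' f(w)$; combined with $y' \sqsubseteq' f(x)$ and transitivity of $\sqsubseteq'$ this yields $y' \sqsubseteq' f(w)$, so $w$ also lies in the preimage. Hence the preimage is an up-set, i.e.\ open, and $f$ is continuous.

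For the necessity direction ($f$ continuous $\Rightarrow$ $f$ order-preserving), I would fix $x \sqsubseteq y$ and consider the basic open set $U_{\sqsubseteq'}(f(x))$. By continuity its preimage is open, hence an up-set; it contains $x$ because $f(x) \sqsubseteq' f(x)$ by reflexivity. Since up-sets are closed upward and $x \sqsubseteq y$, the point $y$ must then lie in the preimage as well, which is precisely the assertion $f(x) \sqsubseteq' f(y)$.

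I do not anticipate a genuine obstacle here; the entire content is the identification of the Scott-open sets with the up-sets on a finite poset, after which both implications become one-line arguments from transitivity and reflexivity. The only point requiring a little care is to make explicit the use of reflexivity (to place the relevant point inside the preimage) and of transitivity (to propagate membership upward), so I would flag these uses rather than leave them implicit.
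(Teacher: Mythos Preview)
Your proof is correct. However, there is nothing to compare against: the paper does not supply a proof of this proposition. It introduces it with the sentence ``The following proposition is well known \cite{L2}'' and then states the result, deferring the argument entirely to Davey and Priestley. Your write-up is the standard one (identify the Scott-open sets with the up-sets of the finite poset, then read off both implications from transitivity and reflexivity), and it would serve perfectly well as the omitted proof.
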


\begin{corollary}\label{initial}
Let $(X,\sqsubseteq)$  be a finite partially ordered set ($|X|=n$). Then, a function $u\colon X\to \{1,...,n\}$ is a labeling if and only if it is continuous with respect to the corresponding Scott topologies. 
\end{corollary}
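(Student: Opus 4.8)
The plan is to reduce Corollary~\ref{initial} to the preceding proposition characterizing Scott-continuous maps as order-preserving maps, and then to reconcile the two order-preserving conditions (one for $\sqsubset$, one for $\sqsubseteq$) on the target chain $\{1,\dots,n\}$. First I would observe that the codomain $\{1,\dots,n\}$ carries the natural linear order, whose associated Scott topology is the standard order topology on a finite chain; so the proposition applies with $(Y,\sqsubseteq')=(\{1,\dots,n\},\leq)$. By that proposition, $u\colon X\to\{1,\dots,n\}$ is continuous for the Scott topologies if and only if $u$ is order-preserving, i.e. $x\sqsubseteq y \Rightarrow u(x)\leq u(y)$ for all $x,y\in X$.

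Next I would compare this with the definition of a labeling (Definition~\ref{lakeste}), which requires the \emph{strict} implication $x\sqsubset y \Rightarrow u(x)<u(y)$. So the content of the corollary is that, for a map into the chain $\{1,\dots,n\}$, being order-preserving for $\sqsubseteq$ is equivalent to the strict labeling condition for $\sqsubset$. The forward direction (labeling $\Rightarrow$ order-preserving) is immediate: if $x\sqsubseteq y$ then either $x=y$, giving $u(x)=u(y)$, or $x\sqsubset y$, giving $u(x)<u(y)$ by the labeling property; in both cases $u(x)\leq u(y)$, so $u$ is order-preserving and hence continuous.

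For the converse I would argue that an order-preserving map into a \emph{linearly ordered} codomain automatically satisfies the strict condition on the strict part of a partial order, because antisymmetry of $\sqsubseteq$ forces $x\sqsubset y$ to be incompatible with $u(x)=u(y)$ here. Concretely, suppose $u$ is order-preserving and $x\sqsubset y$. Then $x\sqsubseteq y$ gives $u(x)\leq u(y)$. If we had $u(x)=u(y)$, I would need to rule it out; the cleanest route is to note that a labeling is required to be injective on comparable elements — indeed the remark following Definition~\ref{repre} identifies labelings with linear extensions, so a genuine labeling is a bijection onto $\{1,\dots,n\}$ and in particular injective. The subtle point, and the step I expect to be the main obstacle, is precisely this gap: order-preservation alone gives only $u(x)\leq u(y)$, whereas the labeling definition demands strict inequality and implicitly injectivity. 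I would resolve it by reading the corollary as asserting the equivalence for maps that are labelings in the intended (injective, surjective onto $\{1,\dots,n\}$) sense, so that $u(x)=u(y)$ with $x\neq y$ cannot occur; under that reading $u(x)\leq u(y)$ together with injectivity yields $u(x)<u(y)$, establishing the labeling property and closing the equivalence.
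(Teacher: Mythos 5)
Your reduction to the preceding proposition is exactly the derivation the paper intends (the corollary is stated without proof, as an immediate consequence of the Scott-continuity $\Leftrightarrow$ order-preservation characterization), and your forward direction is complete and correct: a labeling is weakly monotone for $\sqsubseteq$ by antisymmetry, hence Scott-continuous. You have also put your finger on the one genuinely nontrivial point, namely that continuity only yields $x\sqsubseteq y \Rightarrow u(x)\leq u(y)$, while Definition~\ref{lakeste} demands the strict inequality on $\sqsubset$. Indeed, as literally stated the corollary is false: the constant map $u\equiv n$ is Scott-continuous for any finite poset but is a labeling only when $\sqsubset=\emptyset$. So the converse direction cannot be proved without an additional hypothesis, and you were right to look for one.

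The problem is with how you close that gap. Resolving it ``by reading the corollary as asserting the equivalence for maps that are labelings in the intended sense'' is circular: if the only maps under consideration are already labelings, the implication ``continuous $\Rightarrow$ labeling'' is vacuous and nothing has been proved. The non-circular repair is to restrict the class of \emph{candidate} maps to bijections $u\colon X\to\{1,\dots,n\}$ (which is consistent with the remark identifying labelings with linear extensions, and with $|X|=n$ forcing any injection into $\{1,\dots,n\}$ to be a bijection). For a bijection the argument does close: if $x\sqsubset y$ then $x\neq y$ by antisymmetry, so injectivity gives $u(x)\neq u(y)$, and together with $u(x)\leq u(y)$ from monotonicity this yields $u(x)<u(y)$. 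You should state the bijectivity hypothesis up front rather than smuggle it in through the word ``labeling.'' One further small inaccuracy: the Scott topology on the chain $\{1,\dots,n\}$ is the upper-set topology $\{\emptyset,\{n\},\{n-1,n\},\dots,\{1,\dots,n\}\}$, not the order topology of a finite chain (which is discrete); this does not affect your argument, since you apply the proposition with the Scott topology as the paper defines it, but the identification as written is wrong.
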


\begin{remark}
Let $\precsim$ be a preorder on $X$. Notice that if $\precsim$ is partially (Richter-Peleg) multi-utility representable by a family of partial  functions $\mathcal{U}$, then the number of partial functions needed is less or equal than the number of functions needed for an hypothetical (Richter-Peleg) multi-utility representation $\mathcal{U}'$: $|\mathcal{U}|\leq |\mathcal{U}'|. $

Since a (Richter-Peleg) multi-utility representation is also a partial (Richter-Peleg) multi-utility representation, the inequality $|\mathcal{U}|\leq |\mathcal{U}'|$ is trivial. Moreover,  Example~\ref{ex3} shows that in some cases the number of partial functions needed is strictly less than the number of functions needed for an hypothetical (Richter-Peleg) multi-utility representation, so $|\mathcal{U}|< |\mathcal{U}'|. $
\end{remark}

\begin{example}\label{ex3}

Let $(X,\sqsubseteq)$ be the partially ordered set defined by $\{x_1\sqsubset  x_2\}$. The corresponding representations are shown in Figure~\ref{figureP3}.
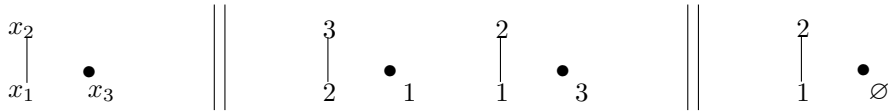
\begin{figure}[htbp]\begin{center}
\begin{tikzpicture}[scale=0.7]
    \draw[] (0,1)--(0,1.85);
\draw (0.35,2) node[anchor=east] {\small $x_2$};
\draw (1.85,0.8) node[anchor=east] {\small $x_3$};
\draw (1.5,1.2) node[anchor=east] {$\bullet$};
\draw (0.35,0.8) node[anchor=east] {\small $x_1$};
\end{tikzpicture}\qquad\quad
\begin{tikzpicture}[scale=0.7]
    \draw[] (0.2,0)--(0.2,2);
    \draw[] (0,0)--(0,2);
\end{tikzpicture}\qquad\quad
\begin{tikzpicture}[scale=0.7]
    \draw[] (0,1)--(0,1.85);
\draw (0.35,2) node[anchor=east] {\small $3$};
\draw (1.85,0.8) node[anchor=east] {\small $1$};
\draw (1.5,1.2) node[anchor=east] {$\bullet$};
\draw (0.35,0.8) node[anchor=east] {\small $2$};
\end{tikzpicture}\qquad
\begin{tikzpicture}[scale=0.7]
    \draw[] (0,1)--(0,1.85);
\draw (0.35,2) node[anchor=east] {\small $2$};
\draw (1.85,0.8) node[anchor=east] {\small $3$};
\draw (1.5,1.2) node[anchor=east] {$\bullet$};
\draw (0.35,0.8) node[anchor=east] {\small $1$};
\end{tikzpicture}\qquad\quad
\begin{tikzpicture}[scale=0.7]
    \draw[] (0.2,0)--(0.2,2);
    \draw[] (0,0)--(0,2);
\end{tikzpicture}\qquad\quad
\begin{tikzpicture}[scale=0.7]
    \draw[] (0,1)--(0,1.85);
\draw (0.35,2) node[anchor=east] {\small $2$};
\draw (1.85,0.8) node[anchor=east] {\small $\varnothing$};
\draw (1.5,1.2) node[anchor=east] {$\bullet$};
\draw (0.35,0.8) node[anchor=east] {\small $1$};
\end{tikzpicture}
\caption{The random structure and a partial Richter-Peleg multi-utility  representation of a poset.}
\label{figureP3}\end{center}
\end{figure}

\end{example}

\begin{remark}

\noindent$(1)$ In  the example and proposition  before, it is shown that the partial multi-utility representations could be used in order to reduce the cardinal of the family of functions. This reduction may be interesting in other applied fields as computation, for example, dealing with distributed systems in order to reduce the number of clocks \cite{virtual}.
\medskip

\noindent$(2)$ The partial  (Richter-Peleg) multi-utility representations can be studied too through permutations as it was done in \cite{qm}. This study searches properties of the order structure by means of the group properties of the corresponding set of permutations (which is directly defined by the corresponding partial  (Richter-Peleg) multi-utility representation of the order structure).
\medskip

\noindent$(3)$ Moreover, partial multi-utility not only reduces the amount of functions needed, but also it allows us to represent some structures that cannot be represented  by the usual multi-utility.
That is, the set of orderings (and also for the particular case of partial orders) that can be represented through partial (Richter-Peleg) multi-utility is  strictly bigger than this that can be represented through (Richter-Peleg)  multi-utility.
Since a (Richter-Peleg) multi-utility representation is also a partial (Richter-Peleg) multi-utility representation, the inequality  is trivial. Moreover, Example~\ref{notrep}  shows that there are partial Richter-Peleg multi-utility representable orderings that cannot be represented just through Richter-Peleg multi-utility.
\end{remark}

\begin{example}\label{notrep}
Let $\precsim$ be the following preorder defined on $X=\mathbb{Q}\times \{0,1\}$:
\[
 (q,i) \prec (p, j)\iff  \left\{
  \begin{array}{ll}
   q< p  &\mbox{ ; } p,q\in X, \forall i,j. \\[4pt]
    q= p  &\mbox{ ; } i=0, j=1.\\[4pt]
  \end{array}\right.
\]
So, $(p,i)\sim (q,j) $ if and only if $ p=q$ and  $i=j$.

Then, since there is an infinite number of jumps,  $((q,0), (q,1))$ for each $q\in \mathbb{Q}$, it is well known (see Bridges and Mehta \cite{BRME}) that there does not exist a Richter-Peleg utility and, therefore, the preorder $\precsim$ (that actually is a total order) fails to be Richter-Peleg multi-utility representable.

However, we are able to construct a partial Richter-Peleg multi-utility representation by means of  --at least-- a countable number of partial functions:

Let $\phi$ be a bijection from $\mathbb{Q}$ to $\mathbb{N}$. Now, for each $n=\phi(q)\in \mathbb{N}$ we define the following two partial functions on $X$:
\[
u_n((p,i))=  \left\{
  \begin{array}{ll}
   p-1  &\mbox{ ; } p\leq q \text{, } i=0.\\[4pt]
   p  &\mbox{ ; } q<p \text{, } i=0.\\[4pt]
   q &\mbox{ ; } p= q \text{, } i=1.\\[4pt]
  \end{array}\right.\quad
v_n((p,i))=  \left\{
  \begin{array}{ll}
   p  &\mbox{ ; } p< q \text{, } i=1.\\[4pt]
   p+1  &\mbox{ ; } q\leq p \text{, } i=1.\\[4pt]
   q &\mbox{ ; } p= q \text{, } i=0.\\[4pt]
  \end{array}\right.
\]

Moreover, it can be proved that, if $X$ is endowed with the order topology $\tau_{\prec}$, then the partial representation is continuous.
 \end{example}


In the  Example~\ref{E05} it is shown that,
 even not continuous partial orders may be continuously represented through a finite partial Richter-Peleg multi-utility representation. This is impossible with multi-utility, as it was shown in Kaminski \cite{Kam}.
 Furthermore, the partial order of Example~\ref{E05} is connected, and since it is not total, it cannot be continuously multi-utility represented  (see Proposition~5.2  of \cite{Alc}).
 We summarize this idea in the following remark: 

  \begin{remark}
There are partial orders that fail to be continuously (Richter-Peleg) multi-utility representable, but that they can be continuously represented by means of a partial (Richter-Peleg) multi-utility representation.
Since a (Richter-Peleg) multi-utility representation is also a partial (Richter-Peleg) multi-utility representation, then it is clear that any
ordering that is continuous (Richter-Peleg) multi-utility representable it is also continuously representable by means of a partial (Richter-Peleg) multi-utility. The converse is not true, as it is shown in  Example~\ref{E05} (as well as in Example~\ref{counterexample}).
\end{remark}

  \begin{example}\label{E05}
Let $(X,\sqsubseteq)$ be the partially ordered set of Example~\ref{Elabel} defined by $\{x_1\sqsubset  x_2\sqsubset  x_4$, $x_3 \sqsubset x_4\}$. The corresponding random structure is shown in Figure~\ref{figureP111}. Now we endow the codomain $\{1,2,3,4\}$ with the Scott topology but,   instead of endowing the set $X$ with the corresponding Scott topology, assume that it is endowed with the  topology $\tau_1 =\{  \emptyset,
\{x_4\}, \{x_2, x_3, x_4\}, \{x_1, x_2, x_4\}, \{x_2, x_4\}, X\}$ or with $\tau_2 =\{  \emptyset,
\{x_4\},$  $\{x_2, x_4\},$ $\{x_3, x_4\},$  $\{x_2, x_3, x_4\},$ $X\}$, which are coarser than the corresponding Scott topology. Notice that, as it is shown in Figure~\ref{figuretopo2}, $\tau_1$ and $\tau_2$ are related to two partial orders that refine the partial order $\sqsubseteq$ of the example.

\begin{figure}[htbp]
\begin{center}
\begin{tikzpicture}[scale=0.9]
\vspace*{-1.8cm}
    \draw[] (0,1.1) -- (0,2);
    \draw[] (0,2)--(1,3);
    \draw[] (2,2)--(1,3);
\draw (0,2) node[anchor=east] {\small $x_2$};
\draw (0.12,1) node[anchor=east] {\small $x_1$};
\draw (2.2,1.9) node[anchor=east] {\small $x_3$};
\draw (1.1,3.2) node[anchor=east] {\small $x_4$};
\end{tikzpicture}\qquad
\qquad \begin{tikzpicture}[scale=0.9]
    \draw[] (0,1.1) -- (0,2);
    \draw[] (0,2)--(1,3);
    \draw[] (2,2)--(1,3);
\draw (0,2) node[anchor=east] {\small $2$};
\draw (0.12,1) node[anchor=east] {\small $1$};
\draw (2.2,1.9) node[anchor=east] {\small $\varnothing$};
\draw (1.1,3.2) node[anchor=east] {\small $3$};
 \draw[] (4,1.1) -- (4,2);
    \draw[] (4,2)--(5,3);
    \draw[] (6,2)--(5,3);
\draw (4,2) node[anchor=east] {\small $\varnothing$};
\draw (4.12,1) node[anchor=east] {\small $\varnothing$};
\draw (6.2,1.9) node[anchor=east] {\small $1$};
\draw (5.1,3.2) node[anchor=east] {\small $2$};
\end{tikzpicture}
\caption{A continuous partial Richter-Peleg multi-utility representation of a poset.}
\label{figureP111berriz}
\end{center}
\end{figure}
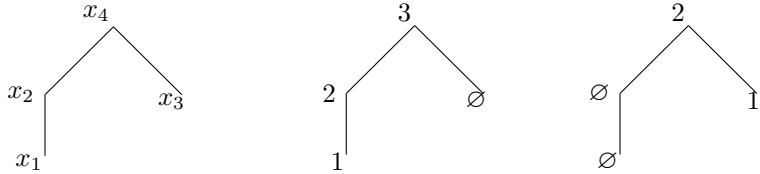

\begin{center}

 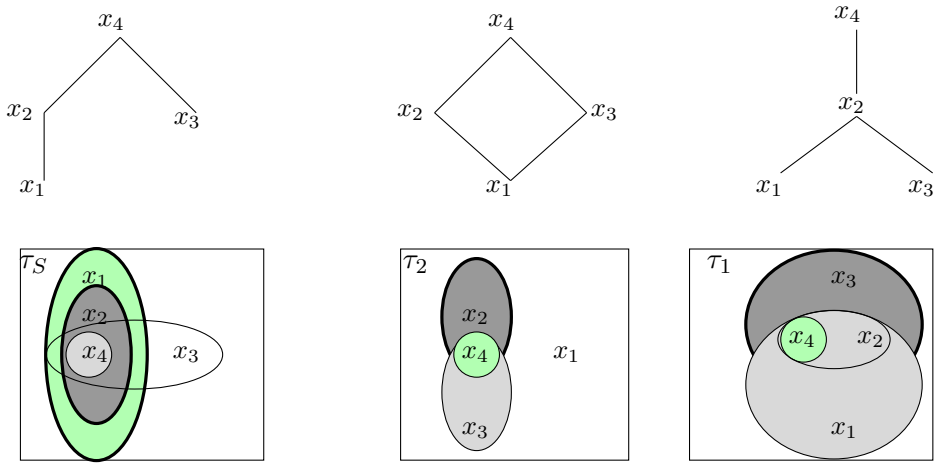
\begin{figure}[htbp]
\begin{tikzpicture}[scale=1]
    \draw[] (0,1.1) -- (0,2);
    \draw[] (0,2)--(1,3);
    \draw[] (2,2)--(1,3);
\draw (0,2) node[anchor=east] {\small $x_2$};
\draw (0.17,1) node[anchor=east] {\small $x_1$};
\draw (2.2,1.9) node[anchor=east] {\small $x_3$};
\draw (1.2,3.2) node[anchor=east] {\small $x_4$};
\end{tikzpicture}\qquad \qquad\quad\quad
\begin{tikzpicture}[scale=1]
    \draw[] (1,1.1) -- (0,2);
        \draw[] (1,1.1) -- (2,2);
    \draw[] (0,2)--(1,3);
    \draw[] (2,2)--(1,3);
\draw (0,2) node[anchor=east] {\small $x_2$};
\draw (1.17,1) node[anchor=east] {\small $x_1$};
\draw (2.55,2) node[anchor=east] {\small $x_3$};
\draw (1.2,3.2) node[anchor=east] {\small $x_4$};
\end{tikzpicture}\quad\quad\quad\quad
\begin{tikzpicture}[scale=1]
    \draw[] (1,1.85)--(2,1.1);
    \draw[] (1,2.15)--(1,3);
    \draw[] (1,1.85)--(0,1.1);
\draw (2.17,0.9) node[anchor=east] {\small $x_3$};
\draw (0.17,0.9) node[anchor=east] {\small $x_1$};
\draw (1.25,2) node[anchor=east] {\small $x_2$};
\draw (1.2,3.2) node[anchor=east] {\small $x_4$};

\end{tikzpicture}

\vspace*{0.5cm}

\begin{tikzpicture}
\begin{scope}[very thick]

\begin{scope}[very thick]

\draw[fill=green!30!white] (-2,0)
ellipse (19pt and 40pt);

\end{scope}
\draw[fill=gray!80!] (-2,0) ellipse (13pt and 26pt);

\end{scope}
\draw[fill=gray!30!white] (-2.1,0) circle (0.3cm);
\draw[thin] (-3,-1.4) rectangle (0.2cm,1.4cm);
\draw[thin] (-1.5,0) ellipse (33pt and 13pt);

\draw (-2.5,1.2) node[anchor=east] {\textbf{$\Large \tau_S$}} ;
\draw (-1.7,0) node[anchor=east] {\small $x_4$} ;
\draw (-0.5,0) node[anchor=east] {\small $x_3$} ;
\draw (-1.7,0.5) node[anchor=east] {\small $x_2$} ;
\draw (-1.7,1) node[anchor=east] {\small $x_1$} ;


\begin{scope}[very thick]
\begin{scope}[very thick]


\end{scope}
\draw[fill=gray!80!] (3,0.5) ellipse (13pt and 22pt);

\end{scope}
\draw[fill=gray!30!] (3,-0.5) ellipse (13pt and 22pt);
\draw[fill=green!30!white] (3,0) circle (0.3cm);
\draw[thin] (2,-1.4) rectangle (5cm,1.4cm);

\draw (2.5,1.2) node[anchor=east] {\textbf{$\Large {\tau}_2$}} ;
\draw (3.3,0) node[anchor=east] {\small $x_4$} ;
\draw (4.5,0) node[anchor=east] {\small $x_1$} ;
\draw (3.3,0.5) node[anchor=east] {\small $x_2$} ;
\draw (3.3,-1) node[anchor=east] {\small $x_3$} ;\quad\qquad\qquad


\begin{scope}[very thick]
\begin{scope}[very thick]


\end{scope}
\draw[fill=gray!80!] (7.7,0.4) ellipse (33pt and 28pt);

\end{scope}
\draw[fill=gray!30!] (7.7,-0.4) ellipse (33pt and 28pt);
\draw[fill=green!30!white] (7.3,0.2) circle (0.3cm);
\draw[thin] (9,-1.4) rectangle (5.8cm,1.4cm);
\draw[thin] (7.7,0.2) ellipse (21pt and 11pt);

\draw (6.5,1.2) node[anchor=east] {\textbf{$\Large \tau_1$}} ;
\draw (7.6,0.2) node[anchor=east] {\small $x_4$} ;
\draw (8.5,0.2) node[anchor=east] {\small $x_2$} ;
\draw (8.15,1) node[anchor=east] {\small $x_3$} ;
\draw (8.15,-1) node[anchor=east] {\small $x_1$} ;

\end{tikzpicture}\caption{Three partial orders and the corresponding Scott topologies.}\label{figuretopo2}
\end{figure}
 \end{center}

Since the topology on $X$ is coarser than the corresponding Scott topology, by Corollary~\ref{initial} the partially ordered set cannot be continuously Richter-Peleg multi-utility representable. However, it is continuous partial Richter-Peleg multi-utility representable  (with respect to topology $\tau_1$ and also with respect to topology $\tau_2$) through the functions shown in Figure~\ref{figureP111berriz}.

\end{example}

\begin{remark}
A study on the relations between partially ordered finite sets, $T_0$ finite topologies and permutations of the symmetric group is done in the paper \cite{qm} of Estevan et al. entitled \emph{Approximating SP-orders through total preorders: incomparability and transitivity through permutations}.
\end{remark}




The following lemma of Schmeidler (1971) is well known in literature.

\begin{lemma}\label{lSchm}
Let $\precsim$ be a nontrivial preorder on a connected topological space $(X,\tau)$. If for every $x\in X$ the sets $d(x)$ and $i(x)$ are closed and the sets $l(x)$ and $r(x)$ are open, then the preorder $\precsim$ is total.
\end{lemma}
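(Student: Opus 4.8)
The plan is to argue by contradiction: assume that $\precsim$ is \emph{not} total and manufacture a nonempty proper clopen subset of $X$, contradicting connectedness. Non-totality yields a pair $a,b\in X$ with $a\bowtie b$. The first thing I would record is how the two topological hypotheses fit together. Writing $C(x)=d(x)\cup i(x)$ for the set of points comparable with $x$ and $N(x)=X\setminus C(x)=\{y:y\bowtie x\}$ for the set of points incomparable with $x$, the closedness of $d(x)$ and $i(x)$ makes $C(x)$ closed and $N(x)$ open. Moreover, since $d(x)=l(x)\cup\{y:y\sim x\}$ and $i(x)=r(x)\cup\{y:y\sim x\}$ with $l(x),r(x)$ open, every point that is \emph{strictly} comparable with $x$ already lies in the interior of $C(x)$. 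Thus $X=C(x)\sqcup N(x)$ is a decomposition into a closed and an open piece, and the only points of $C(x)$ that can fail to be interior are those of the indifference class $\{y:y\sim x\}$.

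Taking $x=a$, the open set $N(a)$ is nonempty (it contains $b$) and $C(a)$ is nonempty (it contains $a$). Hence it suffices to prove that $C(a)$ is also open: then $C(a)$ is a nonempty proper clopen subset of the connected space $X$, the desired contradiction. By the previous paragraph this reduces to showing that each $z\sim a$ is interior to $C(a)$, equivalently that no point indifferent to $a$ lies in the closure $\overline{N(a)}$, i.e. that points incomparable with $a$ cannot accumulate at the indifference class of $a$.

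This last step is exactly where nontriviality must enter, and I expect it to be the main obstacle. It cannot be avoided: the equality preorder $x\precsim y\iff x=y$ on any connected $T_1$ space (for instance $\mathbb{R}$) satisfies all the remaining hypotheses, is not total, yet has $\prec=\varnothing$, and there $C(x)=\{x\}$ is closed but not open; so the argument must genuinely use a strict comparison. The plan is to fix, via nontriviality, a pair $u\prec v$, and to feed in the companion relations forced by transitivity together with $a\bowtie b$ --- namely $r(a)\cap d(b)=\varnothing$, $r(b)\cap d(a)=\varnothing$, $l(a)\cap i(b)=\varnothing$, $l(b)\cap i(a)=\varnothing$, and the covering identity $X=(X\setminus i(v))\cup(X\setminus d(u))$ coming from $i(v)\cap d(u)=\varnothing$ --- in order to forbid an accumulation of incomparable points at the indifference class of $a$ and thereby show $C(a)$ is open. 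The delicate part will be the bookkeeping that converts these order-theoretic relations into the openness of $C(a)$ (equivalently, into an explicit clopen partition of $X$); the topological inputs (closed $d,i$, open $l,r$) and the connectedness of $X$ are otherwise used exactly as in the reduction above.
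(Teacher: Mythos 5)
First, a point of reference: the paper does not prove this lemma at all --- it is quoted as a known result of Schmeidler (1971) with no argument supplied --- so your proposal cannot be measured against an in-text proof and has to stand on its own. As it stands it does not. The reduction to ``show $C(a)=d(a)\cup i(a)$ is open'' is reasonable, your observation that only the indifference class of $a$ is at issue is correct, and the equality-order counterexample showing that nontriviality is indispensable is a genuinely good sanity check. But the step you defer as ``bookkeeping'' is the entire content of the theorem, and the relations you propose to feed in ($r(a)\cap d(b)=\varnothing$, $i(v)\cap d(u)=\varnothing$, and so on) are not the right ingredients: nothing in that list produces a set that is simultaneously open and closed, which is what connectedness needs. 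So there is a genuine gap, not a routine verification left to the reader.

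The missing engine is a pair of identities that convert a closed set into an open one via transitivity. (i) If $x\prec y$, then $d(y)\cup i(x)=l(y)\cup r(x)$: if $z\precsim y$ and $z\not\prec y$ then $z\sim y$, whence $x\prec y\precsim z$ forces $x\prec z$, and dually for $i(x)$; hence this set is clopen and nonempty, so it equals $X$, i.e.\ every $z$ satisfies $z\prec y$ or $x\prec z$. (ii) If $v\bowtie w$ and some $y$ satisfies $y\prec v$ and $y\prec w$, then $d(v)\cap d(w)=l(v)\cap l(w)$ (if $z\precsim v$ but $v\precsim z\precsim w$ then $v\precsim w$, contradicting $v\bowtie w$); this set is therefore clopen, contains $y$, and misses $v$ --- contradicting connectedness. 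Applying (i) to the nontrivial pair $u\prec v$ reduces any two points either to being already comparable or to admitting a common strict lower (or upper) bound, and (ii) then finishes. Your route can be rescued with these tools --- for instance, if $x_0\prec a$ then (ii) shows $r(x_0)\subseteq C(a)$, so $r(x_0)$ is an open neighbourhood of the whole indifference class of $a$ inside $C(a)$, which is the openness you need --- but without (i) and (ii) the openness of $C(a)$ is not bookkeeping; it is the theorem.
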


\begin{proposition}\label{propositiontotal}
Let $\precsim$ be a preorder on a connected topological space $(X,\tau)$ without isolated  points. If there exists a continuous partial Richter-Peleg multi-utility representation $\mathcal{U}=\{u_1, ..., u_n\}$, then $\precsim$ is total on $X$.
\end{proposition}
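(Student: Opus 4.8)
The plan is to invoke Schmeidler's Lemma~\ref{lSchm}, so the whole argument reduces to verifying its hypotheses for $\precsim$ on the connected space $(X,\tau)$: namely that $\precsim$ is nontrivial, that $d(x)$ and $i(x)$ are closed, and that $l(x)$ and $r(x)$ are open, for every $x \in X$. Since the space has no isolated points, by Remark~\ref{remark123}$(3)$ every element lies in the domain of at least one of the finitely many partial functions $u_1,\dots,u_n$; this is what makes the finite representation usable everywhere.

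First I would express the order sets in terms of the representation. Fix $x \in X$. For the lower and upper sections, the key identities I expect to establish are
\[
i(x) = \bigcap_{\{k\,:\,u_k \text{ defined at } x\}} \{\, y \in \mathrm{dom}(u_k) : u_k(x) \leq u_k(y)\,\},
\]
and dually for $d(x)$, using the defining property of a partial multi-utility: $x \precsim y$ holds exactly when every $u_k$ defined at both points satisfies $u_k(x)\le u_k(y)$ (together with the existence of one such $u_k$, guaranteed here by the absence of isolated points). Because each $u_k$ is continuous as a partial function, the set $\{y \in \mathrm{dom}(u_k) : u_k(x) \le u_k(y)\}$ is closed in the subspace $\mathrm{dom}(u_k)$; I would then need the domains themselves to behave well under the finite intersection so that the section comes out closed in $X$. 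For the strict sections, the Richter--Peleg property gives $x \prec y \iff$ there is some $u_k$ defined at both with $u_k(x) < u_k(y)$ and all such functions satisfy $\le$; I would write $r(x)$ as a finite union of sets of the form $\{y : u_k(x) < u_k(y)\}$ intersected appropriately, each open by continuity, so that $r(x)$ and $l(x)$ are open.

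Nontriviality follows because if $\precsim$ were trivial (only reflexive pairs, i.e.\ every distinct pair incomparable) on a space with no isolated points, that would contradict the existence of comparabilities forced by the representation; more carefully, if $\prec$ were empty then every point would be isolated, contrary to hypothesis, so $\precsim$ is nontrivial. With all four topological conditions verified, Lemma~\ref{lSchm} applies directly and yields that $\precsim$ is total on $X$.

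The main obstacle I anticipate is the bookkeeping around \emph{partiality}: the convenient identities for $i(x),d(x),l(x),r(x)$ hold cleanly only on the respective domains, and I must be careful in passing from ``closed (resp.\ open) in $\mathrm{dom}(u_k)$'' to ``closed (resp.\ open) in $X$''. The finiteness of the family $\{u_1,\dots,u_n\}$ is essential precisely here, since finite intersections of closed sets and finite unions of open sets are preserved, whereas an infinite family would break the argument. I would therefore spend the core of the proof making the domain restrictions explicit and checking that, under the no-isolated-points assumption, the finite combinations indeed reproduce the order sections as subsets of $X$ with the required topological closure/openness.
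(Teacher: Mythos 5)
Your overall strategy coincides with the paper's: both reduce the statement to Schmeidler's Lemma~\ref{lSchm} and then verify, from the finiteness of the family and the absence of isolated points, that $d(x)$ and $i(x)$ are closed and that $l(x)$ and $r(x)$ are open. The openness half is handled in the paper essentially as you sketch it. The genuine gap is in your treatment of the closedness of $i(x)$ and $d(x)$, and it is exactly the obstacle you flag at the end but do not resolve. First, the identity
\[
i(x) \;=\; \bigcap_{\{k\,:\,u_k \text{ defined at } x\}} \{\, y \in \mathrm{dom}(u_k) : u_k(x) \leq u_k(y)\,\}
\]
is false: if $x\precsim y$ but $y\notin\mathrm{dom}(u_k)$ for some $k$ with $x\in\mathrm{dom}(u_k)$ (perfectly possible, since Definition~\ref{partialrep} only constrains functions defined at \emph{both} points), then $y\in i(x)$ but $y$ is missing from your right-hand side. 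Second, even after repairing the identity (each factor must be enlarged by $X\setminus\mathrm{dom}(u_k)$, and one must also intersect with the set of $y$ sharing at least one function with $x$), the pieces are only closed \emph{relative to} $\mathrm{dom}(u_k)$, and the domains of partial functions need not be closed in $X$; so the finite-intersection-of-closed-sets route does not go through. This is precisely why the paper proves closedness by a different device, the lemma in the appendix: given a net $(y_i)\to g$ in $d(x)$, the pigeonhole principle on the finitely many functions together with the no-isolated-points hypothesis yields a subnet on all of whose points a single $u_l$ is defined, and one passes to the limit using continuity of $u_l$ on its domain. Some version of that argument (or another way around the non-closedness of the domains) is required; your proposal as written does not contain it.

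A smaller point: your nontriviality check is also not quite right. If $\prec=\varnothing$, a point need not be isolated, since it may be \emph{indifferent} (rather than incomparable) to other points; isolation means incomparability with everything. The paper silently skips nontriviality altogether, so this is a defect shared with the source rather than a divergence from it, but the justification you give does not work as stated.
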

\begin{proof}
It can be proved that if a preorder $\precsim $ on a topological space $({ X}, \tau )$ has a continuous partial multi-utility representation then both $d(x)$ and $i(x)$ are closed subsets of $ X$ for all $x \in { X}$. This proof is similar to the proof
 of  Theorem 3.1 in Kaminski \cite{Kam} (see also Proposition 5 in Bosi and Herden \cite{BH}) and it is included in the appendix with a lemma.
Therefore, by using Lemma \ref{lSchm}, it suffices to show that under our assumptions,
both $l(x)$ and $r(x)$ are open subsets of $ X$ for all $x \in { X}$.
To prove this fact we observe that, from Definition \ref{partialRPrep},\\
$l(x) = \{y \in {X} \mid y \prec x\}=\{y \in {X} \mid v_i(y)< v_i(x), \textnormal{ for all } i\in \{1,...,n\} $  s.t. $v_i$ is defined on both$\}$ $=   \bigcap_{i=1}^n v_i^{-1}((-\infty, v_i(x)))$,\\ 
$r(x)=  \{y \in {X} \mid x \prec y\}=\{y \in {X} \mid v_i(x)< v_i(y), \textnormal{ for all } i\in \{1,...,n\}$ s.t. $v_i$ is defined on both$\}$ $= \bigcap_{i=1}^n v_i^{-1}((v_i(x), +\infty))$\\
for each $x \in { X}$. From these equalities and continuity of the functions $v_i$, the conclusion follows immediately.
\end{proof}

\begin{remark}\label{remarkexample}
In the proposition above, if we allow the existence of isolated points, then the statement is false. We see that through the following example:

Let $\precsim$ be a preorder defined on $X=[0,1]\cup \{2\}$ 
 by $x\precsim y$ if and only if $ x\leq y$ for any $x,y\in [0,1]$, and $ 2 \bowtie x$ for any $ x\in [0,1]$.
 Now, we endow the set with a topology $\tau$ such that it coincides with the usual topology on $[0,1]$  and such that the open neighbourhoods of $2$ are the same of $0'5$ (that is, $\mathcal{O}_2=\{(0'5-\epsilon, 0'5+\epsilon)\cup \{2\}  \}_{\epsilon >0}$). Therefore, $(X, \tau)$ is a connected topological space.
Under these assumptions, the function $v$ defined by $v(x)=x$ (for any $x\in [0,1]$) and $v(2)=\emptyset$ is a  continuous partial Richter-Peleg multi-utility representation. However, the preorder is not total.

By the way, notice that this preorder fails to be continuously multi-utility representable. To see that, observe that the constant sequence $\{0'5\} $ converges to $2$ so, any function $v$ of a continuous multi-utility must satisfy that $v(2)=v(0'5)$. Therefore, we arrive to the contradiction $0'5\sim 2$. We can argue similarly for semicontinuity. 
\end{remark}

The following example shows another case in which it is not possible to achieve a continuous Richter-Peleg multi-utility, but which can be easily represented through a continuous partial Richter-Peleg multi-utility. In fact, it is proved that the preorder of this example does not admit a continuous  multi-utility representation.


\begin{example}\label{counterexample}
Let $X$ be the Cartesian product $\mathbb{R}\times \{0,1\}$ (or interpret that as the union of two real lines: $\mathbb{R}_0$ and $\mathbb{R}_1$) endowed with the following preorder:

\[ 
    (x,i)\prec (y,j) \iff \left\{
      \begin{array}{ll}
      x<y, & \text{ and } i=j; \\ \\[4pt]
      x\leq 0  \text{ and } 1<y, & \text{ with } i\neq j; 
  \end{array}\right.
  \]

\begin{figure}[htbp]
\begin{center}
\begin{tikzpicture}[scale=0.8]
\draw[thick] (-1.5,0.2) node[anchor=east] {$\mathbb{R}_0$};
\draw[dashed] (-1.7,0) -- (-1,0);
    \draw[] (-1,0) -- (5,0);
\draw[dashed] (5,0) -- (6,0);

\draw[thick] (1,0.25) node[anchor=east] {\small $0$};

\draw[thick] (1,0) node[anchor=east] { $\bullet$};


\draw[thick] (4,0.25) node[anchor=east] {\small $1$};
\draw[thick] (4,0) node[anchor=east] { $\bullet$};

\draw[dashed] (0.85,0) -- (3.85,-2);
\draw[dashed] (0.85,-2) -- (3.85,0);

\draw[thick] (-1.5,-1.8) node[anchor=east] {$\mathbb{R}_1$};
\draw[dashed] (-1.7,-2) -- (-1,-2);
    \draw[] (-1,-2) -- (5,-2);
\draw[dashed] (5,-2) -- (6,-2);
\draw[thick] (1,-1.75) node[anchor=east] {\small $0$};

\draw[thick] (1,-2) node[anchor=east] { $\bullet$};

\draw[thick] (4,-1.75) node[anchor=east] {\small $1$};
\draw[thick] (4,-2) node[anchor=east] { $\bullet$};
\end{tikzpicture}
\caption{Preorder defined on $\mathbb{R}\times \{0,1\}$.}
\label{figureCounterExample}
\end{center}
\end{figure}
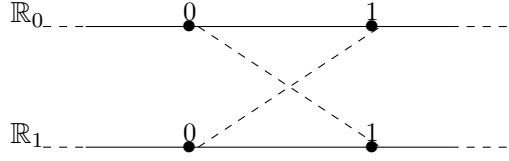

It is trivial that the width of the preorder is two. 
We endow the set $X$ with the order topology $\tau$ arised from the preorder $\precsim$, except for the  points $(0,0)$ and $(0,1)$, whose open neighbourhood  are as follows:
 \begin{center}
 \begin{tabular}{c}
 $\mathcal{O}_{(0,0)}=\{ (x,0) ; x\in (-\epsilon, +\epsilon )\} \cup \{(y,1); y\in (1, 1+\epsilon)  \}$\\
 $\mathcal{O}_{(0,1)}=\{ (x,1) ; x\in (-\epsilon, +\epsilon )\} \cup \{(y,0); y\in (1, 1+\epsilon)  \}$
 \end{tabular}
\end{center}
 It is easy to check that this topology has a countable basis. Moreover, the contour set $i((0,0))$ is not closed (as well as $i((0,1))$), so the preorder is not regular and therefore, by Theorem~3.1. of   \cite{Kam}, the preorder does not  admit a continuous and finite multi-utility representation.  Furthermore, $l((1'5, 0))$ is not open either (as well as $l((1'5,1))$).

However, the following family of functions is a continuous partial Richter-Peleg multi-utility representation of the preorder:

\[
  u_1((x,i))=\left\{
  \begin{array}{ll}
   x &\mbox{; } i=0\\[4pt]
     \varnothing &\mbox{; } \text{otherwise.} \\[4pt]
  \end{array}\right.\quad
  u_2((x,i))=\left\{
  \begin{array}{ll}
   x &\mbox{; } i=1\\[4pt]
     \varnothing &\mbox{; }\text{otherwise.} \\[4pt]
  \end{array}\right.
  \]
  
  \[
  u_3((x,i))=\left\{
  \begin{array}{ll}
   x &\mbox{; } x\leq 0 \text{ and } i=0  \\[4pt]
   x &\mbox{; } 1<x \text{ and } i=1, \\[4pt]
     \varnothing &\mbox{; } \text{ otherwise.} \\[4pt]
  \end{array}\right.\quad
  u_4((x,i))=\left\{
  \begin{array}{ll}
   x &\mbox{; } x\leq 0 \text{ and } i=1, \\[4pt]
      x &\mbox{; } 1<x \text{ and } i=0, \\[4pt]
     \varnothing &\mbox{; } \text{ otherwise.} \\[4pt]
  \end{array}\right.
\]

Furthermore, it is possible to prove that there is no continuous  Richter-Peleg multi-utility representation of this preorder, even through an infinite number of functions. To see that we can argue by contradiction.

If there was  a continuous  Richter-Peleg multi-utility representation $\{u_i\}_{i\in I}$, since $(0,0)\prec (1+\frac{1}{n}, 1)$ for any $n\in \mathbb{N}$, then each function would satisfy that $u_i((0,0))< u_i((1+\frac{1}{n}, 1))$ for any $n\in \mathbb{N}$. Notice that the sequence $\{(1+\frac{1}{n}, 1)\}_{n\in \mathbb{N}}$ converges to $(0,0)$ (as well as to $(1,1)$) so, applying the limits we have that $ \lim_{n\rightarrow + \infty} \{u_i((1+\frac{1}{n}, 1))\}_{n\in \mathbb{N}} = u_i(1,1)=u_i(0,0)$ for each function of the representation. Since $(0,1)\prec (0'5, 1)\prec (1, 1)$ and $\{u_i\}_{i\in I}$ is a Richter-Peleg multi-utility representation, we conclude that $u_i(0,1)< u_i(0,0)$ for any function of the representation, arriving to the desired contradiction, because $(0,0) \bowtie (0,1)  $.

In fact, it is also impossible to obtain a continuous multi-utility, even through an infinite number of functions\footnote{As we said, by Theorem~3.1. of   \cite{Kam}, it is known that the preorder does not  admit a continuous and finite multi-utility representation. }. To see that we recover the sequence before and argue again by contradiction. If there was  a continuous   multi-utility representation $\{u_i\}_{i\in I}$, since $(0,0)\prec (1+\frac{1}{n}, 1)$ for any $n\in \mathbb{N}$, then each function would satisfy that $u_i((0,0))\leq u_i((1+\frac{1}{n}, 1))$ for any $n\in \mathbb{N}$. Notice that the sequence $\{(1+\frac{1}{n}, 1)\}_{n\in \mathbb{N}}$ converges to $(0,0)$ (as well as to $(1,1)$) so, applying the limits we have that $ \lim_{n\rightarrow + \infty} \{u_i((1+\frac{1}{n}, 1))\}_{n\in \mathbb{N}} = u_i(1,1)=u_i(0,0)$ for each function of the representation. Therefore, since $u_i(1,1)=u_i(0,0)$ for each function of the representation, we conclude that $(0,0)\precsim (0,1)\precsim (0,0)$, arriving to the desired contradiction, because $(0,0) \bowtie (0,1)  $.


\end{example}

\section{Partial representability of intransitive relations}\label{s4}

In the present section we study the partial representability of intransitive relations.
If a relation has a multi-utility representation, then it must be transitive (see the introduction). Therefore, multi-utility is not useful dealing with intransitive relations. But this is not the case of partial multi-utility, which allows us to characterize intransitive relations too, as it can be seen in Example~\ref{Esemiz} and Example~\ref{Erpsemi}.

In this section we also include a subsection related to the particular case of semiorders and Scott-Suppes representations. It is well known that semiorders fail to be transitive, and they are usually represented through a Scott-Suppes representation \cite{ales,BRME,Luc,Sco}. In this subsection we introduce the new concept  of a  partial Scott-Suppes representation, which generalizes the classical one. Before this new definition, in the following lines we show some examples of semiorders which are represented through a partial Richter-Peleg multi-utility representation.

\begin{example}\label{Esemiz}
Let $\precsim$ be a semiorder  on $\mathbb{Z}$ defined  by $n\precsim m$ if and only if $ n\leq m+1$. 
Then, the family of functions $\{u,v,w\}$ (see Figure~\ref{figurePicmusemf}) is a partial Richter-Peleg multi-utility representation:
 \[
  u(m)=\left\{
  \begin{array}{ll}
   n &\mbox{; } m=3n \\[4pt]
  n &\mbox{; } m=3n+1 \\[4pt]
   \varnothing &\mbox{; } \text{else} \\[4pt]
  \end{array}\right. \quad
  v(m)=\left\{
  \begin{array}{ll}
   n &\mbox{; } m=3n+1\\[4pt]
  n &\mbox{; } m=3n+2\\[4pt]
   \varnothing &\mbox{; } \text{else} \\[4pt]
  \end{array}\right.
\quad
  w(m)=\left\{
  \begin{array}{ll}
   n &\mbox{, } m=3n+2  \\[4pt]
  n &\mbox{; } m=3n+3  \\[4pt]
   \varnothing &\mbox{; } \text{else,} \\[4pt]
  \end{array}\right.
\]
 where $m$ and $n$ are numbers from $\mathbb{Z}$.
\end{example}
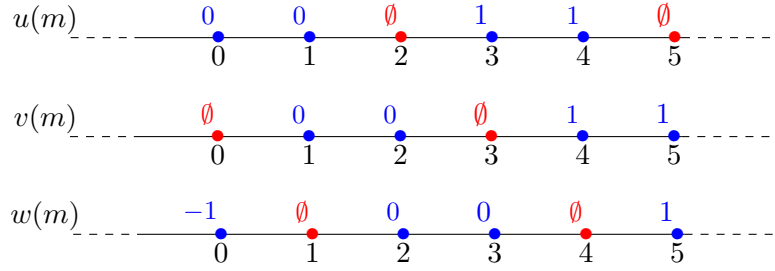
\begin{figure}[htbp]
\begin{center}
\begin{tikzpicture}[scale=1.2]
\draw[thick] (-1.5,0.2) node[anchor=east] {$u(m)$};
\draw[dashed] (-1.7,0) -- (-1,0);
    \draw[] (-1,0) -- (5,0);
\draw[dashed] (5,0) -- (6,0);
 \draw[thick, blue] (0,0.25) node[anchor=east] {\small $0$};
\draw[thick, blue] (1,0.25) node[anchor=east] {\small $0$};
\draw[thick, red] (2,0.25) node[anchor=east] {\small $\emptyset$};
\draw[thick, blue] (3,0.25) node[anchor=east] {$1$};
\draw[thick, blue] (4,0.25) node[anchor=east] {\small $1$};
\draw[thick, red] (5,0.25) node[anchor=east] {$\emptyset$};
\draw[thick, blue] (0.12,0) node[anchor=east] {$ \bullet $};
\draw[thick, blue] (1.12,0) node[anchor=east] {$ \bullet$};

\draw[thick, red] (2.12,0) node[anchor=east] {$ \bullet$};
\draw[thick, blue] (3.12,0) node[anchor=east] {$ \bullet$};

\draw[thick, blue] (4.12,0) node[anchor=east] {$ \bullet$};
\draw[thick, red] (5.12,0) node[anchor=east] {$ \bullet$};
\draw[thick] (0.12,-0.2) node[anchor=east] {$0$};
\draw[thick] (1.12,-0.2) node[anchor=east] {$1$};

\draw[thick] (2.12,-0.2) node[anchor=east] {$2$};
\draw[thick] (3.12,-0.2) node[anchor=east] {$3$};

\draw[thick] (4.12,-0.2) node[anchor=east] {$4$};
\draw[thick] (5.12,-0.2) node[anchor=east] {$5$};
\end{tikzpicture}\vspace{0.2cm}
\begin{tikzpicture}[scale=1.2]
\draw[thick] (-1.5,0.2) node[anchor=east] {$v(m)$};
\draw[dashed] (-1.7,0) -- (-1,0);
    \draw[] (-1,0) -- (5,0);
\draw[dashed] (5,0) -- (6,0);
 \draw[thick, red] (0,0.25) node[anchor=east] {\small $\emptyset$};
\draw[thick, blue] (1,0.25) node[anchor=east] {\small $0$};
\draw[thick, blue] (2,0.25) node[anchor=east] {\small $0$};
\draw[thick, red] (3,0.25) node[anchor=east] {$\emptyset$};
\draw[thick, blue] (4,0.25) node[anchor=east] {\small $1$};
\draw[thick, blue] (5,0.25) node[anchor=east] {$1$};
\draw[thick, red] (0.12,0) node[anchor=east] {$ \bullet $};
\draw[thick, blue] (1.12,0) node[anchor=east] {$ \bullet$};

\draw[thick, blue] (2.12,0) node[anchor=east] {$ \bullet$};
\draw[thick, red] (3.12,0) node[anchor=east] {$ \bullet$};

\draw[thick, blue] (4.12,0) node[anchor=east] {$ \bullet$};
\draw[thick, blue] (5.12,0) node[anchor=east] {$ \bullet$};
\draw[thick] (0.12,-0.2) node[anchor=east] {$0$};
\draw[thick] (1.12,-0.2) node[anchor=east] {$1$};

\draw[thick] (2.12,-0.2) node[anchor=east] {$2$};
\draw[thick] (3.12,-0.2) node[anchor=east] {$3$};

\draw[thick] (4.12,-0.2) node[anchor=east] {$4$};
\draw[thick] (5.12,-0.2) node[anchor=east] {$5$};
\end{tikzpicture}\vspace{0.2cm}
\begin{tikzpicture}[scale=1.2]
\draw[thick] (-1.5,0.2) node[anchor=east] {$w(m)$};
\draw[dashed] (-1.7,0) -- (-1,0);
    \draw[] (-1,0) -- (5,0);
\draw[dashed] (5,0) -- (6,0);
 \draw[thick, blue] (0,0.25) node[anchor=east] {\small $-1$};
\draw[thick, red] (1,0.25) node[anchor=east] {\small $\emptyset$};
\draw[thick, blue] (2,0.25) node[anchor=east] {\small $0$};
\draw[thick, blue] (3,0.25) node[anchor=east] {$0$};
\draw[thick, red] (4,0.25) node[anchor=east] {\small $\emptyset$};
\draw[thick, blue] (5,0.25) node[anchor=east] {$1$};
\draw[thick, blue] (0.12,0) node[anchor=east] {$ \bullet $};
\draw[thick, red] (1.12,0) node[anchor=east] {$ \bullet$};

\draw[thick, blue] (2.12,0) node[anchor=east] {$ \bullet$};
\draw[thick, blue] (3.12,0) node[anchor=east] {$ \bullet$};

\draw[thick, red] (4.12,0) node[anchor=east] {$ \bullet$};
\draw[thick, blue] (5.12,0) node[anchor=east] {$ \bullet$};
\draw[thick] (0.12,-0.2) node[anchor=east] {$0$};
\draw[thick] (1.12,-0.2) node[anchor=east] {$1$};

\draw[thick] (2.12,-0.2) node[anchor=east] {$2$};
\draw[thick] (3.12,-0.2) node[anchor=east] {$3$};

\draw[thick] (4.12,-0.2) node[anchor=east] {$4$};
\draw[thick] (5.12,-0.2) node[anchor=east] {$5$};
\end{tikzpicture}
\caption{A partial Richter-Peleg multi-utility  representation of the semiorder.}
\label{figurePicmusemf}
\end{center}
\end{figure}

\begin{example}\label{Erpsemi}
Let $\precsim$ be a semiorder defined on $\mathbb{R}$ as follows:
\[x\precsim y \iff x\leq y+1. \]
Then, the family of functions 
 $\{u_r\}_{r\in [0,2)}$ is a partial Ritcher-Peleg multi-utility representation (see Figure~\ref{figurePicmusem}):
 \[
  u_r(x)=\left\{
  \begin{array}{ll}
   4n &\mbox{; } x\in [4n+r, 4n+r+1) \\[4pt]
     \varnothing &\mbox{; } x\in [4n+r+1, 4n+r+2) \\[4pt]
  4n+2 &\mbox{; } x\in [4n+r+2, 4n+r+3) \\[4pt]
     \varnothing &\mbox{; } x\in [4n+r+3, 4n+r+4) \\[4pt]
  \end{array}\right.
\]
 where $x\in \mathbb{R}$ and $n\in \mathbb{Z}$.
\end{example}

\begin{figure}[htbp]
\begin{center}
\begin{tikzpicture}[scale=1.3]
\draw[thick] (-1.5,0.2) node[anchor=east] {$u_0(x)$};
\draw[dashed] (-1.7,0) -- (-1,0);
    \draw[thick, red] (-1,0)--(0,0);
    \draw[thick, blue] (0,0)--(1,0);
    \draw[thick, red] (1,0)--(2,0);
    \draw[thick, blue] (2,0)--(3,0);
    \draw[thick, red] (3,0)--(4,0);
    \draw[thick, blue] (4,0)--(5,0);
\draw[dashed] (5,0) -- (6,0);
 \draw[thick, red] (-0.5,0.25) node[anchor=east] {\small $\emptyset$};
\draw[thick, blue] (0.5,0.25) node[anchor=east] {\small $0$};
\draw[thick, red] (1.5,0.25) node[anchor=east] {\small $\emptyset$};
\draw[thick, blue] (2.5,0.25) node[anchor=east] {$2$};
\draw[thick, red] (3.5,0.25) node[anchor=east] {\small $\emptyset$};
\draw[thick, blue] (4.5,0.25) node[anchor=east] {$4$};
\draw[thick, blue] (0.12,0) node[anchor=east] {$[$};
\draw[thick, blue] (1.12,0) node[anchor=east] {$)$};

\draw[thick, blue] (2.12,0) node[anchor=east] {$[$};
\draw[thick, blue] (3.12,0) node[anchor=east] {$)$};

\draw[thick, blue] (4.12,0) node[anchor=east] {$[$};
\draw[thick, blue] (5.12,0) node[anchor=east] {$)$};
\draw[thick] (0.12,-0.2) node[anchor=east] {$0$};
\draw[thick] (1.12,-0.2) node[anchor=east] {$1$};

\draw[thick] (2.12,-0.2) node[anchor=east] {$2$};
\draw[thick] (3.12,-0.2) node[anchor=east] {$3$};

\draw[thick] (4.12,-0.2) node[anchor=east] {$4$};
\draw[thick] (5.12,-0.2) node[anchor=east] {$5$};
\end{tikzpicture}\vspace{0.6cm}

\begin{tikzpicture}[scale=1.3]
\draw[thick] (-1.5,0.2) node[anchor=east] {$u_{0.5}(x)$};
\draw[dashed] (-1.7,0) -- (0,0);
    \draw[thick, red] (-0.5,0)--(0.5,0);
    \draw[thick, blue] (0.5,0)--(1.5,0);
    \draw[thick, red] (1.5,0)--(2.5,0);
    \draw[thick, blue] (2.5,0)--(3.5,0);
    \draw[thick, red] (3.5,0)--(4.5,0);
    \draw[thick, blue] (4.5,0)--(5.5,0);
\draw[dashed] (5.5,0) -- (6,0);
 \draw[thick, red] (-0.0,0.25) node[anchor=east] {\small $\emptyset$};
\draw[thick, blue] (1,0.25) node[anchor=east] {\small $0$};
\draw[thick, red] (2,0.25) node[anchor=east] {\small $\emptyset$};
\draw[thick, blue] (3,0.25) node[anchor=east] {$2$};
\draw[thick, red] (4,0.25) node[anchor=east] {\small $\emptyset$};
\draw[thick, blue] (5,0.25) node[anchor=east] {$4$};
\draw[thick, blue] (0.62,0) node[anchor=east] {$[$};
\draw[thick, blue] (1.62,0) node[anchor=east] {$)$};

\draw[thick, blue] (2.62,0) node[anchor=east] {$[$};
\draw[thick, blue] (3.62,0) node[anchor=east] {$)$};

\draw[thick, blue] (4.62,0) node[anchor=east] {$[$};
\draw[thick, blue] (5.62,0) node[anchor=east] {$)$};
\draw[thick] (0.12,-0.2) node[anchor=east] {$0$};
\draw[thick] (0.7,-0.2) node[anchor=east] {$0.5$};
\draw[thick] (1.7,-0.2) node[anchor=east] {$1.5$};

\draw[thick] (2.7,-0.2) node[anchor=east] {$2.5$};
\draw[thick] (3.7,-0.2) node[anchor=east] {$3.5$};

\draw[thick] (4.7,-0.2) node[anchor=east] {$4.5$};
\draw[thick] (5.7,-0.2) node[anchor=east] {$5.5$};
\end{tikzpicture}\vspace{0.6cm}

\begin{tikzpicture}[scale=1.3]
\draw[thick] (-1.5,0.2) node[anchor=east] {$u_1(x)$};
\draw[dashed] (-1.7,0) -- (-1,0);
    \draw[thick, blue] (-1,0)--(0,0);
    \draw[thick, red] (0,0)--(1,0);
    \draw[thick, blue] (1,0)--(2,0);
    \draw[thick, red] (2,0)--(3,0);
    \draw[thick, blue] (3,0)--(4,0);
    \draw[thick, red] (4,0)--(5,0);
\draw[dashed] (5,0) -- (6,0);
 \draw[thick, blue] (-0.5,0.25) node[anchor=east] {\small $0$};
\draw[thick, red] (0.5,0.25) node[anchor=east] {\small $\emptyset$};
\draw[thick, blue] (1.5,0.25) node[anchor=east] {\small $2$};
\draw[thick, red] (2.5,0.25) node[anchor=east] {$\emptyset$};
\draw[thick, blue] (3.5,0.25) node[anchor=east] {\small $4$};
\draw[thick, red] (4.5,0.25) node[anchor=east] {$\emptyset$};
\draw[thick, blue] (-0.88,0) node[anchor=east] {$[$};
\draw[thick, blue] (0.12,0) node[anchor=east] {$)$};
\draw[thick, blue] (1.12,0) node[anchor=east] {$[$};

\draw[thick, blue] (2.12,0) node[anchor=east] {$)$};
\draw[thick, blue] (3.12,0) node[anchor=east] {$[$};

\draw[thick, blue] (4.12,0) node[anchor=east] {$)$};
\draw[thick, blue] (5.12,0) node[anchor=east] {$[$};
\draw[thick] (0.12,-0.2) node[anchor=east] {$0$};
\draw[thick] (1.12,-0.2) node[anchor=east] {$1$};

\draw[thick] (2.12,-0.2) node[anchor=east] {$2$};
\draw[thick] (3.12,-0.2) node[anchor=east] {$3$};

\draw[thick] (4.12,-0.2) node[anchor=east] {$4$};
\draw[thick] (5.12,-0.2) node[anchor=east] {$5$};
\end{tikzpicture}
\caption{Three partial functions of the partial Richter-Peleg multi-utility  representation $\{u_r\}_{r\in [0,2)}$.}
\label{figurePicmusem}
\end{center}
\end{figure}
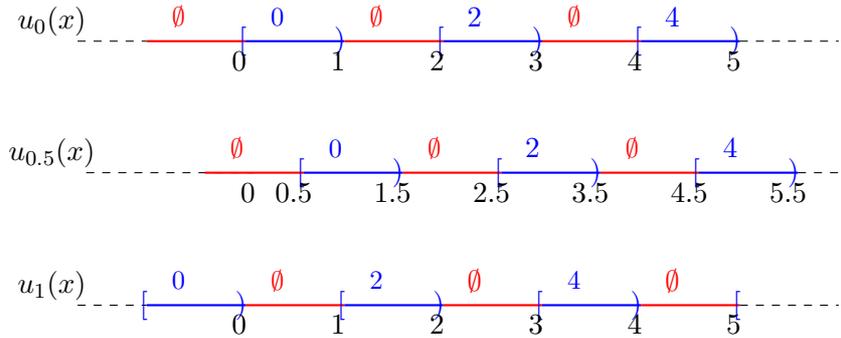

\begin{remark}
Notice that, since regularity of the semiorder is not necessary for this kind of partial multi utility representations, there are semiorders that  cannot be represented by means of a Scott-Suppes representation but that they can be represented through a partial multi-utility Richter-Peleg representation.
\end{remark}

\subsection{Partial Scott-Suppes representations}

As it has shown, partial (Richter-Peleg) multi-utility may be useful in order to represent intransitive relations and, in particular, semiorders. 
However, dealing with these last orderings, to keep the threshold could be a good technique if we want to simplify the representation (e.g. reduce the number of functions of the representation and facilitate their construction), but always trying to represent as much semiorders as possible. For this purpose (simplicity and usefulness), mixing partial representability and the threshold seems a plausible answer: partial SS-representability.

Since we keep the threshold, by means of this partial SS-representation we do not renounce at all to the usual Scott-Suppes  representation,  but now (as it is shown by means of examples) we can achieve some properties (mainly representability or (semi)continuity) in cases for which that was impossible in the usual manner.

In the following lines we introduce two new concepts of partial SS-repre\-sen\-ta\-bi\-li\-ty: one in a `multi-utility' manner (the weak one) and the other one in a `Richter-Peleg' manner (the strict one).

\begin{definition}
Let $\precsim$ be a semiorder defined on a set $X$. Then, we say that the semiorder is \emph{partially  Scott-Suppes representable} (\emph{partially SS-representable} for short) if there exists a family of partial functions $\mathcal{U}$ such that:
\begin{enumerate}
\item[$(i)$] $x\precsim y$ if and only if there exists $u\in\mathcal{U}$ such that $u(x)\leq u(y)+1$ and, $v(x)\leq v(y)+1$ for any $v\in \mathcal{U}$ defined in both $x$ and $y$.

\item[$(ii)$] $x\prec y$ if and only if there exists $u\in\mathcal{U}$ such that $u(x)+1< u(y)$ and, $v(x)\leq v(y)+1$ for any $v\in \mathcal{U}$ defined in both $x$ and $y$.
\end{enumerate}
\end{definition}

\begin{definition}
Let $\precsim$ be a semiorder defined on a set $X$. Then, we say that the semiorder is \emph{partially Richter-Peleg Scott-Suppes representable} (\emph{partially RPSS-representable} for short) if there exists a family of partial functions $\mathcal{U}$ such that:
\begin{enumerate}
\item[$(i)$] $x\precsim y$ if and only if there exists $u\in\mathcal{U}$ such that $u(x)\leq u(y)+1$ and, $v(x)\leq v(y)+1$ for any $v\in \mathcal{U}$ defined in both $x$ and $y$.

\item[$(ii)$] $x\prec y$ if and only if there exists $u\in\mathcal{U}$ such that $u(x)+1< u(y)$ and, $v(x)+1< v(y)$ for any $v\in \mathcal{U}$ defined in both $x$ and $y$.
\end{enumerate}
\end{definition}

Since the functions of a partial RPSS-representation $\mathcal{U}$ are isotonic, that is, for any partial function $u\in \mathcal{U}$ defined in a pair $x,y\in X$ such that $x\prec y$ it must hold that $u(x)+1<u(y)$, it migth seem more successful to obtain a   partial RPSS-representation  instead of a  partial SS-representation. However, notice that both kind of representations totally characterize the order structure and, since the concept of  partial RPSS-representation is more restrictive, there are simple cases (as it is shown in Example~\ref{Eseq}) in
which this representation is less `comfortable' or useful than  the  partial SS-representation.

\begin{example}\label{Eseq}
Let $\precsim$ be a semiorder on $X=\{\frac{1}{n}\}_{n\in \mathbb{N}}\cup \{0\}$ defined as follows:
\[  1\prec \frac{1}{2}\prec \cdots\prec \frac{1}{n}\prec\frac{1}{n+1}\prec\cdots\prec 0 , \quad n\in \mathbb{N}.\]
It is trivial that the semiorder is actually a total order. It is trivial too  that this semiorder fails to be regular and so, it cannot be represented through a SS-representation.\footnote{See  \cite{gurea2,gurea1,gurea3} for more necessary conditions for representability and also for continuity.}

Nevertheless, it is easily partial SS-representable just by means of the following two partial functions:
 \[
  u(x)=\left\{
  \begin{array}{ll}
   2n &\mbox{; } x=\frac{1}{n}, \, n\in \mathbb{N} \\[4pt]
     \varnothing &\mbox{; } x=0 \\[4pt]
  \end{array}\right. \quad
  v(x)=\left\{
  \begin{array}{ll}
   0 &\mbox{; } x\in \{\frac{1}{n}\}_{n\in \mathbb{N}}\\[4pt]
    2 &\mbox{; } x=0 \\[4pt]
  \end{array}\right.\]

\end{example}

The example before proves that, since regularity is not a necessary condition for the partial SS-representability, then there are partial SS-representable semiorders that fail to be SS-representable.

\begin{proposition}
A SS-representable semiorder is also partial (Richter-Peleg) SS-re\-pre\-sen\-ta\-ble. The converse is not true since
there are partial (Richter-Peleg) SS-re\-pre\-sen\-ta\-ble semiorders that fail to be SS-re\-pre\-sen\-ta\-ble.
\end{proposition}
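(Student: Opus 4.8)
The statement has two parts. The plan is to prove the forward implication directly from the definitions, and then to establish the non-equivalence by exhibiting a counterexample, for which Example~\ref{Eseq} is already available in the text.

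First I would show that every SS-representable semiorder is partial (Richter-Peleg) SS-representable. Suppose $\precsim$ admits a Scott-Suppes representation $(u,1)$, so that $x\precsim y \iff u(x)\leq u(y)+1$ for all $x,y\in X$. The natural move is to take the singleton family $\mathcal{U}=\{u\}$, where $u$ is now viewed as a (total) partial function. I would then verify conditions $(i)$ and $(ii)$ of partial RPSS-representability. Condition $(i)$ is immediate: since $u$ is defined everywhere, ``there exists $u\in\mathcal{U}$ with $u(x)\leq u(y)+1$'' and ``$v(x)\leq v(y)+1$ for all $v$ defined on both'' both reduce to the single inequality $u(x)\leq u(y)+1$, which is exactly the Scott-Suppes condition characterizing $x\precsim y$. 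For condition $(ii)$ I would use the standard fact that in a Scott-Suppes representation the strict part is characterized by $x\prec y \iff u(x)+1<u(y)$; with the singleton family, both clauses of $(ii)$ again collapse to this one inequality. Hence $\mathcal{U}=\{u\}$ is a partial RPSS-representation, and since every partial RPSS-representation is in particular a partial SS-representation, the semiorder is partial (Richter-Peleg) SS-representable.

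For the converse, I would simply invoke Example~\ref{Eseq}. There the semiorder on $X=\{\frac{1}{n}\}_{n\in\mathbb{N}}\cup\{0\}$ is exhibited together with an explicit two-function partial SS-representation, while it is observed that the semiorder fails to be regular; since regularity is a necessary condition for SS-representability (as recalled in the remark following the definition of SS-representability), this semiorder is partial SS-representable but not SS-representable. This proves that the converse fails and completes the argument.

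The only delicate point is purely bookkeeping: one must make sure that when the family is a singleton of a total function, the quantifier ``there exists $u$'' and the universal clause ``for all $v$ defined on both $x$ and $y$'' are consistent and both yield the desired single inequality, rather than creating a gap between ``some function'' and ``every function''. Because the family has exactly one member, defined everywhere, these two quantifiers coincide, so there is no genuine obstacle here. I would remark explicitly on this collapse so the reader sees why the multi-utility-style definitions reduce to the classical one in the totally defined, single-function case, in keeping with Remark~\ref{remark123}$(2)$ on how partial representations generalize the usual ones.
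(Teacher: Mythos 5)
Your proposal is correct and follows essentially the same route as the paper, which in fact states this proposition without a displayed proof: the forward implication is exactly the observation (made explicitly in a later remark of the paper) that a total Scott--Suppes utility $u$ yields the singleton family $\{u\}$ satisfying both clauses of the partial (RP)SS definitions, and the converse is exactly the appeal to Example~\ref{Eseq} together with the necessity of regularity for SS-representability. One subtlety is worth flagging, and it affects the paper as much as your write-up: the two-function family $\{u,v\}$ of Example~\ref{Eseq} is only a \emph{weak} partial SS-representation, not a partial RPSS-representation, since for $1\prec \frac{1}{2}$ one has $v(1)=v(\frac{1}{2})=0$, so $v(1)+1<v(\frac{1}{2})$ fails even though $v$ is defined on both points. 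Hence if the parenthetical ``(Richter-Peleg)'' in the converse is read as asserting the existence of partially \emph{RPSS}-representable semiorders that are not SS-representable, Example~\ref{Eseq} alone does not settle it; by the paper's own subsequent proposition any such representation of a non-regular semiorder must use infinitely many partial functions, and neither you nor the paper exhibits one. For the weak reading your argument is complete.
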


However, as the following proposition shows, it is impossible to obtain a partial RPSS-representation through a finite number of partial functions of a non regular semiorder.

\begin{proposition}
Let $\precsim$ be a non regular semiorder on $X$. If there exists a  partial RPSS-representation $\mathcal{U}$ of the semiorder, then the cardinal of $\mathcal{U}$ is infinite.
\end{proposition}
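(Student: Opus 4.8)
The plan is to argue by contradiction, exploiting the feature of clause $(ii)$ that in a partial RPSS-representation \emph{every} function defined on a $\prec$-comparable pair $x\prec y$ is forced to obey the strict inequality $v(x)+1<v(y)$, not merely some witnessing one. Since $\prec$ is transitive on a semiorder, non-regularity will hand me an element lying strictly below an infinite strictly $\prec$-descending chain; a single function constrained simultaneously by the chain and by that lower element must diverge to $-\infty$ while remaining bounded below, which is absurd.

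First I would reduce to one of the two symmetric cases. If $\precsim$ is not regular, Definition~\ref{regular} supplies either an element $x\in X$ with a sequence $(x_n)$ satisfying $x\prec x_n$ for all $n$ and $x_{n+1}\prec x_n$ for all $n$, or the dual increasing configuration bounded above. The dual case is handled by passing to the transpose semiorder $\precsim^{t}$ together with the family $\{-u:u\in\mathcal{U}\}$, which one checks directly to be a partial RPSS-representation of $\precsim^{t}$ of the same cardinality (the inequalities in $(i)$ and $(ii)$ merely change sign under $u\mapsto -u$, and $\prec^{t}$ is the converse of $\prec$). Hence I may assume the descending configuration. Because $\prec$ is transitive for semiorders (interval orders), the chain yields $x_m\prec x_{m'}$ whenever $m>m'$, and $x\prec x_n$ for every $n$.

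Now suppose, for contradiction, that $\mathcal{U}=\{u_1,\dots,u_k\}$ is finite. For each $n$, the relation $x\prec x_n$ together with clause $(ii)$ requires at least one function of $\mathcal{U}$ to be defined on both $x$ and $x_n$, since otherwise no witnessing inequality $u(x)+1<u(x_n)$ could hold. As $\mathcal{U}$ is finite, the pigeonhole principle produces a single $u^{\ast}\in\mathcal{U}$ defined on $x$ and on $x_n$ for all indices $n$ in some infinite set $N\subseteq\mathbb{N}$. For these indices the universal clause of $(ii)$ applied to $x\prec x_n$ gives $u^{\ast}(x)+1<u^{\ast}(x_n)$, so $u^{\ast}$ is bounded below on $\{x_n:n\in N\}$ by $c:=u^{\ast}(x)+1$.

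Finally I would play the chain against this lower bound. Writing $N=\{n_1<n_2<\cdots\}$, consecutive indices satisfy $x_{n_{j+1}}\prec x_{n_j}$, and since $u^{\ast}$ is defined on both points the universal clause of $(ii)$ forces $u^{\ast}(x_{n_{j+1}})+1<u^{\ast}(x_{n_j})$. Iterating gives $u^{\ast}(x_{n_j})<u^{\ast}(x_{n_1})-(j-1)\to -\infty$, contradicting $u^{\ast}(x_{n_j})>c$ for all $j$. Hence $\mathcal{U}$ cannot be finite. I expect the main obstacle to be cleanly separating the two roles played by $u^{\ast}$: the pigeonhole only secures the \emph{existence} of a function defined on $x$ and on cofinally many $x_n$, whereas the decisive strict inequalities along the chain come \emph{automatically} from the universal clause of $(ii)$ once $u^{\ast}$ is known to be defined on the relevant pairs. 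The transpose reduction and the transitivity of $\prec$ on semiorders are the only auxiliary facts needed.
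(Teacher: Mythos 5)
Your proof is correct and follows essentially the same route as the paper's: by contradiction, use the pigeonhole principle on the finitely many functions to extract a single partial function defined on $x$ and on infinitely many terms of the chain, then play the forced unit gaps along the chain against the bound coming from the comparisons with $x$. The only differences are cosmetic — you work with the descending configuration and reduce the ascending one via the transpose, while the paper treats the ascending case directly and takes the symmetry for granted.
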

\begin{proof}
Since it is not regular, we may assume that there is a sequence
$\{x_{n}\}_{n\in \mathbb{N}}$ and an element $x\in X$ such that
$ x_1\prec x_2\prec \cdots\prec x_{n}\prec x_{n+1}\prec\cdots\prec x $,  $n\in \mathbb{N}.$

By contradiction, we suppose that the cardinal of $\mathcal{U}$ is finite. Since $x_n\prec x$ for any $n\in \mathbb{N}$, for any $n\in \mathbb{N}$ there must exist a partial function $u_n\in \mathcal{U}$ such that $u_n(x_n)+1<u_n(x)$. But, since the cardinal of $\mathcal{U}$ is finite, it implies that there must exists a partial function $u$ defined on a infinite number of elements $\{x_{m_k}\}_{m_k\in M\subseteq \mathbb{N}}$  such that
$ x_{m_1}\prec x_{m_2}\prec \cdots\prec x_{m_s}\prec x_{m_{s+1}}\prec\cdots\prec x $, $ s\in \mathbb{N}$, as well as on $x$.
Therefore, we arrive to the desired contradiction, since the existence of this function implies that $u(x)>n$ for any $n\in \mathbb{N}$, what is not possible.
\end{proof}

\begin{remark}
Therefore, partial SS-representations allow us to represent not regular semiorder even through a finite number of functions, whereas they cannot be represented by means of the usual SS-representation.
Not regular semiorders may be represented too by means of a partial RPSS-representation, but in this case an infinite number of partial functions is needed.
\end{remark}

Moreover, the advantages of the partial  (RP)SS-representations can be found dealing with continuity too:

\begin{remark}
Since any  SS-representation of a semiorder is also a partial (Richter-Peleg) SS-representation,
any continuous  SS-representation of a semiorder is also continuous partial (Richter-Peleg) SS-representation. The converse is not true since
there are continuous partial (Richter-Peleg) SS-representations of semiorders that fail to be continuously SS-representable (see Example~\ref{Esc}).

\end{remark}

  \begin{example}\label{Esc}
Let $\precsim$ be the usual semiorder on $X=\mathbb{R}\setminus (0,0.5]$ defined as:
\[x\precsim y \iff x\leq y+1, \, x,y\in X.\]
If we endow the set $X$ with the corresponding order topology $\tau_<$ of the euclidean order $\leq$ on $X$, it is well known \cite{BRME} (see also \cite{gurea3} for more necessary conditions for continuity) that this semiorder is not continuously representable. However, we can construct a continuous partial  RPSS-representation by means of the following three continuous partial functions:

\[
  u(x)=\left\{
  \begin{array}{ll}
   x &\mbox{; } x\neq 0\\[4pt]
     \varnothing &\mbox{; } x=0 \\[4pt]
  \end{array}\right.\quad
  v(x)=\left\{
  \begin{array}{ll}
   x &\mbox{; } x\notin (0.5, 1]\\[4pt]
     \varnothing &\mbox{; } \text{ else,} \\[4pt]
  \end{array}\right.\quad
  w(x)=\left\{
  \begin{array}{ll}
   0.5 &\mbox{; } x= 0\\[4pt]
   x &\mbox{; } x\in (0.5, 1]\\[4pt]
     \varnothing &\mbox{; } \text{ else.} \\[4pt]
  \end{array}\right.
\]

\end{example}




 \section{Further comments} \label{s6}

In this paper a new concept of representability has been introduced, always trying to match simplicity with usefulness as well as  generalizing the classical ones. Through this partial representability the ordering is totally characterized and, besides, the amount of data needed for doing that is reduced. At the same time, we achieve to represent more orderings than by means of the usual concept of representability. In particular, this may be remarkable in the case of semiorders, where the usual Scott-Suppes representation seems to be too restrictive, since non regular simple semiorders cannot be represented.

Several recent papers have studied the problem of representability \cite{Alc,ales,gurea1,gurea2,EvO,Kam,Min2,Ok}, as well as the corresponding continuity \cite{subm,BH,gurea3,GDOGL}. Although new results and some characterizations have been achieved, some questions are still open.

This new concept of partial representability increases the set of orderings that are now (continuously) representable (by means of partial functions) and so, it opens an analogue study in order to identify that set, as it was done in the papers cited before by using the classical approach.

\section*{Appendix}
As we said, a similar proof --but for usual functions, not partial-- of the following lemma was done in \cite{Kam}. 

\begin{lemma}
Let $\precsim $ be a preorder without isolated points defined on a topological space $({ X}, \tau )$. If there exists  a continuous partial multi-utility representation $\{u_1,..., u_n\}$, then both $d(x)$ and $i(x)$ are closed subsets of $ X$ for all $x \in { X}$. 
\end{lemma}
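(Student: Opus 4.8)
The plan is to prove the statement for $i(x)=\{z\in X:\ x\precsim z\}$ and then deduce it for $d(x)$ by order reversal: if $\{u_1,\dots,u_n\}$ is a continuous partial multi-utility representation of $\precsim$, then $\{-u_1,\dots,-u_n\}$ is one for the reverse preorder (same domains, same continuity, same absence of isolated points), and under this reversal $i(x)$ and $d(x)$ are interchanged. So it suffices to show that $i(x)$ is closed for every $x$. I would first record the exact characterization extracted from Definition~\ref{partialrep}: writing $X_j$ for the domain of $u_j$ and $I(p)=\{j:\ p\in X_j\}$, one has $x\precsim z$ if and only if $I(x)\cap I(z)\neq\varnothing$ and $u_j(x)\le u_j(z)$ for every $j\in I(x)\cap I(z)$. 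The hypothesis of no isolated points is what makes this usable: by Remark~\ref{remark123}$(3)$, $I(p)\neq\varnothing$ for every $p\in X$, so every point is covered by at least one domain, and in particular $I(x)\neq\varnothing$.

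Next I would argue by a convergent net (sequences would do under first countability, but nets cover the general case). Let $z\in\overline{i(x)}$ and choose a net $(z_\alpha)\subseteq i(x)$ with $z_\alpha\to z$. Each $z_\alpha$ satisfies $x\precsim z_\alpha$, so there is an index $j(\alpha)\in I(x)$ with $z_\alpha\in X_{j(\alpha)}$ and $u_{j(\alpha)}(x)\le u_{j(\alpha)}(z_\alpha)$. Since the representation is finite, $I(x)$ is finite, so a pigeonhole argument on the directed index set produces a single $j_0\in I(x)$ that occurs cofinally; passing to that subnet I may assume $z_\alpha\in X_{j_0}$ and $u_{j_0}(x)\le u_{j_0}(z_\alpha)$ for all $\alpha$, with $x\in X_{j_0}$.

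The decisive step, and the one I expect to be the main obstacle, is transferring this inequality to the limit $z$. By continuity of the partial function $u_{j_0}$ (Definition~\ref{partialfunctioncont}), the set $u_{j_0}^{-1}\bigl([u_{j_0}(x),+\infty)\bigr)$ is closed in the subspace $X_{j_0}$; the difficulty is that $X_{j_0}$ need not be closed in $X$, so a priori the limit $z$ could escape the domain and $u_{j_0}(z)$ be undefined. This is exactly the point where the proof must go beyond the total-function case, and where the no-isolated-points assumption together with the Kaminski-type reasoning referenced in the text (compare Theorem~3.1 of \cite{Kam} and Proposition~5 of \cite{BH}) has to be invoked to certify that $z\in X_{j_0}$ and that $u_{j_0}(z_\alpha)\to u_{j_0}(z)$; continuity then yields $u_{j_0}(x)\le u_{j_0}(z)$.

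Finally I would upgrade this single-function inequality to the full characterization of $x\precsim z$. First, $I(x)\cap I(z)\neq\varnothing$ because $j_0$ lies in it. Second, for every other $k\in I(x)\cap I(z)$ I would repeat the net/continuity analysis: $x,z\in X_k$, and extracting a cofinal subnet of $(z_\alpha)$ lying in $X_k$ (again justified via the domain-control step above) and using continuity of $u_k$ gives $u_k(x)\le u_k(z)$. Hence every function defined on both $x$ and $z$ respects the inequality, so $x\precsim z$, i.e.\ $z\in i(x)$. This shows $i(x)$ is closed, and the order-reversal reduction at the outset then gives that $d(x)$ is closed as well, for every $x\in X$.
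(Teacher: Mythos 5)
Your overall architecture is the same as the paper's: reduce to one of the two contour sets, take a convergent net in it, use the finiteness of the representation to pigeonhole onto a single partial function $u_{j_0}$ that witnesses the relation cofinally, and then pass the inequality $u_{j_0}(x)\le u_{j_0}(z_\alpha)$ to the limit by continuity. (Your reduction of $d(x)$ to $i(x)$ by negating the functions is a harmless cosmetic difference; the paper just runs the argument for $d(x)$ and says the other case is analogous. Your version of the pigeonhole, which selects for each $\alpha$ a witness defined on both $x$ and $z_\alpha$, is actually cleaner than the paper's.)

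The problem is that your ``decisive step'' is not a step: you explicitly concede that you cannot certify $z\in X_{j_0}$ and defer to ``the no-isolated-points assumption together with the Kaminski-type reasoning'' of \cite{Kam} and \cite{BH}. No argument is supplied, and none of the cited hypotheses delivers this. The absence of isolated points only guarantees that every point lies in \emph{some} domain $X_j$ (Remark~\ref{remark123}$(3)$), not in the particular domain $X_{j_0}$ produced by the pigeonhole; and the results of \cite{Kam} and \cite{BH} concern total functions, where the issue does not arise. Since Definition~\ref{partialfunctioncont} only makes $u_{j_0}$ continuous on the subspace $X_{j_0}$, which need be neither open nor closed in $X$, the limit $z$ really can escape the domain, and then $u_{j_0}(z)$ is undefined and the inequality you want has no meaning. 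The same defect infects your last paragraph: for $k\in I(x)\cap I(z)$ there is no reason a cofinal subnet of $(z_\alpha)$ should lie in $X_k$, so continuity of $u_k$ gives you nothing about $u_k(z)$. So the proposal, as written, is a correct identification of where the difficulty sits but not a proof. For what it is worth, the paper's own argument glosses over exactly the same two points: it applies continuity of $u_l$ at the limit $g$ without verifying that $g$ belongs to the domain of $u_l$, and it concludes $g\in d(x)$ from a single inequality $u_l(g)\le u_l(x)$ without checking the remaining functions defined on both $g$ and $x$. You have located the genuine obstruction; neither you nor the printed proof removes it.
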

\begin{proof}
First, notice that $d(x)=\{y\in X ; y\precsim x\}$ coincides with the set $\{y\in X ; u_k(y)\leq u_k(x), \forall u_k \text{ defined in both}\}$. 

Consider an arbitrary net $(y_i)_{i\in I}$ convergent in $X$, such that $y_i\in d(x)$ for any $i\in I$.  Let $g$ be the limit of this net: $(y_i)_{i\in I} \longrightarrow g$. In order to show that $d(x)$ is closed it suffices to prove that $g\in d(x)$.

By definition there is no isolated points. Therefore, since the number of function is finite, there must be --at least-- a function $u_l$ defined in all the points of a subnet $(y_j)_{j\in J\subseteq I}$ (otherwise, there exists an index $i_0\in I$ such that there is no function defined in $y_i$ for any $i>i_0$, arriving to a contradiction). It is well known that the subnet converges to the same point of the net, that is,  $(y_j)_{j\in J\subseteq I}\longrightarrow g$. 
Since  functions $u_k$ --in particular $u_l$-- are continuous,  $\{u_l (y_j)\}_{j\in J\subseteq I}\longrightarrow u_l(g)$. But $u_l(y_j)\leq u_l(x)$ for any $j\in J$ so, we deduce that $u_l(g)\leq u_l(x)$. Thus, $g\in d(x)$. Therefore, $d(x) $ is closed.

Analogous proof is done for $i(x)$.
 \end{proof}
 \section*{Acknowledgements}

 Asier Estevan acknowledges financial support 
 from the Ministry of Economy and Competitiveness of Spain under grants MTM2012-37894-C02-02, MTM2015-63608-P and ECO2015-65031 as well as from the Basque Government under grant IT974-16. Gianni Bosi and Magal\`\i\ E. Zuanon acknowledge financial support from the Istituto Nazionale di Alta Matematica  $\lq\lq$F. Severi" (Italy).


\section*{References}

\end{document}